\providecommand{\U}[1]{\protect\rule{.1in}{.1in}}
\newtheorem{theorem}{Theorem}[section]
\newtheorem{corollary}[theorem]{Corollary}
\newtheorem{lemma}[theorem]{Lemma}
\newtheorem{proposition}[theorem]{Proposition}
\theoremstyle{definition}
\newtheorem{definition}[theorem]{Definition}
\theoremstyle{remark}
\newtheorem{example}[theorem]{Example}
\newtheorem{remark}[theorem]{Remark}
\numberwithin{equation}{section}
\begin{document}

\title{Spectral parameter power series for perturbed Bessel equations}
\author{Ra\'{u}l Castillo-P\'{e}rez$^{1}$, Vladislav V. Kravchenko$^{2}$ and Sergii M.
Torba$^{2}$\\{\small $^{1}$SEPI, ESIME Zacatenco, Instituto Polit\'{e}cnico Nacional, Av.
IPN S/N, C.P. 07738, D.F. Mexico}\\{\small $^{2}$Departamento de Matem\'{a}ticas, CINVESTAV del IPN, Unidad
Quer\'{e}taro, Libramiento Norponiente \#2000,}\\{\small Fracc. Real de Juriquilla, Quer\'{e}taro, Qro. C.P. 76230 MEXICO}\\e-mail: rcastillo@ipn.mx, vkravchenko@math.cinvestav.edu.mx,\\storba@math.cinvestav.edu.mx}
\maketitle

\begin{abstract}
A spectral parameter power series (SPPS) representation for regular solutions
of singular Bessel type Sturm-Liouville equations with complex coefficients is
obtained as well as an SPPS representation for the (entire) characteristic
function of the corresponding spectral problem on a finite interval. It is
proved that the set of zeros of the characteristic function coincides with the set of all eigenvalues
of the Sturm-Liouville problem. Based on the SPPS representation a new mapping
property of the transmutation operator for the considered perturbed Bessel
operator is obtained, and a new numerical method for solving corresponding
spectral problems is developed. The range of applicability of the method
includes complex coefficients, complex spectrum and equations in which the
spectral parameter stands at a first order linear differential operator. On a
set of known test problems we show that the developed numerical method based
on the SPPS representation is highly competitive in comparison to the best
available solvers such as SLEIGN2, MATSLISE and some other codes and give an
example of an exactly solvable test problem admitting complex eigenvalues to
which the mentioned solvers are not applicable meanwhile the SPPS method
delivers excellent numerical results.

\end{abstract}

\section{Introduction}

In the present work the equation
\begin{equation}
-u^{\prime\prime}+\left(  \frac{l(l+1)}{x^{2}}+q(x)\right)  u=\lambda
\bigl(r_{1}(x)u^{\prime}+r_{0}(x)u\bigr),\qquad x\in(0,a], \label{I1}%
\end{equation}
is studied, where $l$ is a real number, $l\geq-\frac{1}{2}$, $q$ is a
complex-valued continuous function on $(0,a]$ satisfying a growth bound
$|q(x)|\le Cx^{\alpha}$ at the origin for some $\alpha>-2$, $r_{0,1}\in
C\left[  0,a\right]  $ are complex valued functions, and $\lambda$ is a
(complex) spectral parameter. Denote $L=-\frac{d^{2}}{dx^{2}}+\frac
{l(l+1)}{x^{2}}+q(x)$. Equations of the form (\ref{I1}) appear naturally in
many real-world applications after a separation of variables and therefore
have received considerable attention (see, e.g., \cite{BoumenirChanane},
\cite{Chebli1994}, \cite{Guillot 1988}, \cite{KosTesh2011}, \cite{Weidmann}).
In preceding publications equation (\ref{I1}) was studied under more
restrictive conditions, typically for $q$ and $r_{0}$ being real-valued and
$r_{1}\equiv0$. The approach developed in this work does not imply such
restrictions and serves both for qualitative study of solutions and spectral
problems, as well as for related numerical computation.

The main component in the developed approach is a spectral parameter power
series (SPPS) representation for the regular solution of (\ref{I1}) obtained
under the condition that the auxiliary equation $Lu_{0}=0$ possesses a regular
solution which does not have zeros on $[0,a]$ except at $x=0$. The SPPS
representation for solutions of nonsingular linear differential equations and
its applications in corresponding scattering and spectral problems were studied
in \cite{CKKO2009}, \cite{CKOR}, \cite{ErbeMertPeterson2012}, \cite{KKB2013},
\cite{KiraRosu2010}, \cite{Khmelnytskaya Serroukh 2013 MMAS},
\cite{Khmelnytskaya Torchinska 2010}, \cite{KrCV08}, \cite{APFT},
\cite{KKR2012}, \cite{KrPorter2010}, \cite{KrVelasco 2011}, \cite{Rabinovich
et al 2013 MMAS} and some other papers. Here, in Section \ref{Sect2} we obtain
an analogous result for the perturbed Bessel equation (\ref{I1}). The
construction and the existence of the required particular solution are
addressed in Section \ref{Sect3}. For example, when $q(x)\geq0$, $x\in(0,a]$
such nonvanishing on $(0,a]$ solution exists. We give an analytic
representation for it together with an estimate. Let us emphasize that, in
general, $u_{0}$ is allowed to be a complex-valued function, and the existence of such $u_{0}$ for a complex valued $q$ is an open problem.

Under the assumption that $u_{0}$ exists we obtain a dispersion
(characteristic) relation for the Sturm-Liouville problem for (\ref{I1}) on
$[0,a]$, $a<\infty$ (Theorem \ref{ThmSpectralProblem}). Namely, we construct
an entire function $\Phi(\lambda)$ in the form of a Taylor series such that the
set of its zeros coincides with the set of all eigenvalues of the
Sturm-Liouville problem. This immediately implies the discreteness of the
spectrum and offers an efficient method for its computation.

In practical applications of the SPPS method it is often convenient to
consider not only series with the centre in the origin $\lambda=0$ but also
series with the centre at $\lambda=\lambda_{0}$ where $\lambda_{0}$ is some
complex number. In Section \ref{SectSpShift} we show that this spectral
parameter shifting technique is applicable to equation (\ref{I1}) and give
necessary details.

The SPPS representation allows us to obtain a result on mapping properties of
the transmutation operator corresponding to the operator $L$, which was
studied, e.g., in \cite{Chebli1994}, \cite{Stashevskaya} and \cite{Volk}. We
show in Section \ref{SectTransmut} how the transmutation operator acts on certain powers of the independent
variable. In the case of nonsingular Schr\"{o}dinger operators a result of this kind
allowed us to advance in the construction of the transmutation operator itself
\cite{KT Transmut} and had applications in constructing complete systems of
solutions for some partial differential equations \cite{CKM}, \cite{CKT},
\cite{KKTT}.

In Section \ref{SectNumeric} we explain the numerical implementation of the
developed SPPS method for solving Sturm-Liouville problems for (\ref{I1}).
First, we consider several known test problems, and comparing the obtained
results with the results obtained by the best available codes, as SLEIGN2,
MATSLISE and some others, we show that our method is highly competitive and
gives better or at least comparable results on test problems to which other
codes are applicable. Second, we consider an example which involves a
different from zero $r_{1}$ in (\ref{I1}) and a complex spectrum. Meanwhile
SLEIGN2 and MATSLISE are not applicable to problems admitting complex
eigenvalues, our method delivers results which are in excellent agreement with
the exact data.

\section{Construction of the bounded solution of a perturbed Bessel
equation\label{Sect2}}

Consider a perturbed Bessel operator (also known as a spherical
Schr\"{o}dinger operator)
\begin{equation}
L=-\frac{d^{2}}{dx^{2}}+\frac{l(l+1)}{x^{2}}+q(x),\qquad l\geq-\frac{1}%
{2},\ x\in(0,a], \label{OpSingularSL}%
\end{equation}
where the potential $q$ is (in general) a complex-valued continuous function
on $(0,a]$ satisfying the growth condition in the origin
\begin{equation}
q(x)=O(x^{\alpha}),\quad x\rightarrow0\quad\text{for some }\alpha>-2.
\label{EqGrowthQ}%
\end{equation}
Note that we understand the $O$-notation in the sense of inequality,
i.e., there exist a neighborhood $(0,\varepsilon]$ of zero and a constant
$C>0$ such that $|q(x)|\leq Cx^{\alpha}$ for all $x\in(0,\varepsilon]$.

If $l\neq0$ or $q\not \in L^{1}(0,a]$, the left endpoint is singular. Despite
of that, the equation $Lu=0$ possesses a solution $\phi(x)$ which is bounded
at $x=0$ and satisfies the following asymptotics at $x=0$
\begin{align}
\phi(x)  &  \sim x^{l+1},\quad x\rightarrow0,\label{SolAsymptotic}\\
\phi^{\prime}(x)  &  \sim(l+1)x^{l},\quad x\rightarrow0,
\label{DerSolAsymptotic}%
\end{align}
see, e.g., \cite[Lemma 3.2]{KosTesh2011} for the real-valued potential $q$. We
show the explicit construction of the solution with this asymptotics at
zero for the general case of complex potentials in Section \ref{SectPartSol}
meanwhile in Section \ref{SectSpectrProb} we show that such solution is unique.

Together with $L$ consider a linear differential operator
\begin{equation}
Ru=r_{0}u+r_{1}u^{\prime} \label{OpRHS}%
\end{equation}
of order at most one, where $r_{0,1}\in C[0,a]$ are complex-valued functions,
and consider the following differential equation involving a spectral
parameter $\lambda$
\begin{equation}
Lu=\lambda Ru\qquad\text{or}\qquad-u^{\prime\prime}+\left(  \frac
{l(l+1)}{x^{2}}+q(x)\right)  u=\lambda\bigl(r_{1}(x)u^{\prime}+r_{0}(x)u\bigr).
\label{MainEq}%
\end{equation}

In order to construct a spectral parameter power series representation of a
non-singular in zero solution of \eqref{MainEq} assume that there exists a
non-vanishing on $(0,a]$ complex-valued solution $u_{0}$ of the equation
\begin{equation}
-u_{0}^{\prime\prime}+\left(  \frac{l(l+1)}{x^{2}}+q(x)\right)  u_{0}=0
\label{PartSolEq}%
\end{equation}
satisfying together with its first derivative the asymptotic relations
\eqref{SolAsymptotic} and \eqref{DerSolAsymptotic}. Let us define the
following system of recursive integrals
\begin{equation}%
\begin{split}
\widetilde{X}^{(0)}  &  \equiv1,\qquad\widetilde{X}^{(-1)}\equiv0,\\
\widetilde{X}^{(n)}(x)  &  =%
\begin{cases}
\displaystyle\int_{0}^{x}u_{0}(t)R\bigl[u_{0}(t)\widetilde{X}^{(n-1)}%
(t)\bigr]\,dt, & \text{if }n\text{ is odd},\\
-\displaystyle\int_{0}^{x}\frac{\widetilde{X}^{(n-1)}(t)}{u_{0}^{2}(t)}\,dt, &
\text{if }n\text{ is even}.
\end{cases}
\end{split}
\label{Xtilde}%
\end{equation}
We keep the notation $\widetilde{X}$ for consistency with the notations
from other publications on the SPPS method, see, e.g., \cite{KrPorter2010},
\cite{KKR2012}, \cite{KT Obzor}. Note that for an odd $n$ we have%
\[%
\begin{split}
R\bigl[u_{0}\widetilde{X}^{(n-1)}\bigr]  &  =\Bigl(r_{1}\frac{d}%
{dx}+r_{0}\Bigr)\bigl(u_{0}\widetilde{X}^{(n-1)}\bigr)\\
&  =r_{1}u_{0}^{\prime}\widetilde{X}^{(n-1)}-r_{1}u_{0}\frac{\widetilde
{X}^{(n-2)}}{u_{0}^{2}}+r_{0}u_{0}\widetilde{X}^{(n-1)}=R[u_{0}]\widetilde
{X}^{(n-1)}-\frac{r_{1}}{u_{0}}\widetilde{X}^{(n-2)}.
\end{split}
\]
Hence we can write \eqref{Xtilde} in a different form, which does not require
differentiation of the functions $\widetilde{X}^{(n)}$,
\begin{equation}
\widetilde{X}^{(n)}(x)=%
\begin{cases}
\displaystyle\int_{0}^{x}\bigl(u_{0}(t)R[u_{0}](t)\widetilde{X}^{(n-1)}%
(t)-r_{1}(t)\widetilde{X}^{(n-2)}(t)\bigr)\,dt, & \text{if }n\text{ is odd},\\
-\displaystyle\int_{0}^{x}\frac{\widetilde{X}^{(n-1)}(t)}{u_{0}^{2}(t)}\,dt, &
\text{if }n\text{ is even}.
\end{cases}
\label{XtildeAlt}%
\end{equation}

The following lemma establishes that all the involved integrals in
\eqref{XtildeAlt} are well defined and provides some estimates for the
functions $\widetilde{X}^{(n)}$.

\begin{lemma}
\label{LemmaXtildeEstimate} Let \eqref{PartSolEq} admit a solution $u_{0}\in
C[0,a]\cap C^{2}(0,a]$ (in general, complex-valued) which does not have other
zeros on $[0,a]$ except at $x=0$ and satisfies the asymptotic relations
\eqref{SolAsymptotic} and \eqref{DerSolAsymptotic}. Then the system of
functions $\bigl\{\widetilde{X}^{(n)}\bigr\}_{n=0}^{\infty}$ is well defined
by \eqref{Xtilde} or \eqref{XtildeAlt} and the functions $\widetilde{X}^{(n)}$
satisfy the inequalities
\begin{equation}
\bigl|\widetilde{X}^{(2n)}(x)\bigr|\leq\frac{C^{2n}x^{n}}{(2l+2)_{n}}%
,\qquad\bigl|\widetilde{X}^{(2n+1)}(x)\bigr|\leq\frac{(n+1)C^{2n+1}%
x^{2(l+1)+n}}{(2l+2)_{n+1}}, \label{XtildeEstimate}%
\end{equation}
where $(x)_{n}=\frac{\Gamma(x+n)}{\Gamma(x)}=x(x+1)\cdot\ldots\cdot(x+n-1)$ is
the Pochhammer symbol, $C=\max\{1,C_{1},C_{2},C_{3}\}$ and the constants
$C_{1}$, $C_{2}$ and $C_{3}$ are such that for any $t\in(0,a]$ the following
inequalities hold
\begin{equation}
\bigl|u_{0}(t)R[u_{0}](t)\bigr|\leq C_{1}t^{2l+1},\qquad\left\vert \frac
{1}{u_{0}^{2}(t)}\right\vert \leq C_{2}t^{-2l-2},\qquad|r_{1}(t)|\leq C_{3}.
\label{WeightsEstimate}%
\end{equation}

\end{lemma}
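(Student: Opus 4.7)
The plan is to prove the inequalities \eqref{XtildeEstimate} by induction on $n$, simultaneously treating the even and odd cases; the well-definedness of the integrals in \eqref{XtildeAlt} will then follow as a by-product, since the inductive estimates will show that at each step the integrand is dominated by a power of $t$ that is integrable at the origin.

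Before starting the induction I would check that the constants $C_1, C_2, C_3$ in \eqref{WeightsEstimate} really exist. By hypothesis $u_0$ is continuous on $[0,a]$, nonvanishing on $(0,a]$, and satisfies $u_0(x)\sim x^{l+1}$ and $u_0'(x)\sim (l+1)x^l$ as $x\to 0$. Consequently $u_0(x)/x^{l+1}$ extends to a continuous, nowhere-vanishing function on $[0,a]$, hence is bounded above and bounded away from zero there. This yields $|1/u_0^2(t)|\le C_2 t^{-2l-2}$. For the first estimate, $R[u_0]=r_0u_0+r_1u_0'$ with $r_0,r_1\in C[0,a]$, so near zero $R[u_0](t)=O(t^l)$, while $u_0(t)=O(t^{l+1})$, giving $|u_0(t)R[u_0](t)|\le C_1 t^{2l+1}$ on $(0,a]$; on the interior this bound is easy to extend by compactness. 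The bound on $r_1$ is immediate from continuity.

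For the induction, the base cases are $\widetilde{X}^{(0)}\equiv 1$, which is \eqref{XtildeEstimate} at $n=0$, and $\widetilde{X}^{(1)}(x)=\int_0^x u_0(t)R[u_0](t)\,dt$ (using $\widetilde{X}^{(-1)}\equiv 0$), which is bounded by $C_1 x^{2l+2}/(2l+2)\le C\cdot x^{2l+2}/(2l+2)_1$. For the inductive step, assume \eqref{XtildeEstimate} holds up to index $2m-1$. In the even case $n=2m$, plug the inductive bound for $\widetilde{X}^{(2m-1)}$ and the bound on $1/u_0^2$ into \eqref{Xtilde}; the integrand is bounded by a constant times $t^{m-1}$, which is integrable at $0$ for $m\ge 1$, and after integration one obtains $|\widetilde{X}^{(2m)}(x)|\le C^{2m-1}C_2 x^m/(2l+2)_m\le C^{2m}x^m/(2l+2)_m$. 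In the odd case $n=2m+1$, apply \eqref{XtildeAlt}, bound each of the two terms of the integrand by a multiple of $t^{2l+1+m}$ using the inductive hypothesis, and integrate; this produces
\[
|\widetilde{X}^{(2m+1)}(x)|\le \bigl(C_1 C^{2m}+m\,C_3 C^{2m-1}\bigr)\frac{x^{2l+2+m}}{(2l+2)_m\,(2l+2+m)}.
\]

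The bookkeeping step, which I expect to be the only mildly delicate point, is to recognize $(2l+2)_m(2l+2+m)=(2l+2)_{m+1}$ and to estimate $C_1 C^{2m}+mC_3 C^{2m-1}\le C^{2m+1}+mC^{2m+1}=(m+1)C^{2m+1}$, using $C\ge\max\{1,C_1,C_3\}$, which yields exactly the required bound $(m+1)C^{2m+1}x^{2(l+1)+m}/(2l+2)_{m+1}$. The hypothesis $l\ge-1/2$ enters only to guarantee that $t^{2l+1}$ is integrable at zero in the odd-index integrals, so the whole recursion is justified.
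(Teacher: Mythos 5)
Your proof is correct and follows essentially the same route as the paper's: induction on the index, with the base cases $\widetilde{X}^{(0)}$, $\widetilde{X}^{(1)}$ and the same two-term estimate in the odd step leading to the factor $(m+1)C^{2m+1}$ via $(2l+2)_m(2l+2+m)=(2l+2)_{m+1}$. Your preliminary verification that $C_1$ and $C_2$ exist is a welcome addition (the paper relegates this to a separate remark), but it does not change the substance of the argument.
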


\begin{remark}
The constants $C_{1}$ and $C_{2}$ in Lemma \ref{LemmaXtildeEstimate} exist due
to the fact that $u_{0}$ is a non-vanishing function possessing asymptotics
\eqref{SolAsymptotic} and \eqref{DerSolAsymptotic}.
\end{remark}

\begin{proof}
The proof is by induction. Indeed, for $n=0$ we have $|\widetilde X^{(0)}(x)|\le 1$ and
\begin{equation*}
\bigl| \widetilde X^{(1)}(x)\bigr|\le \int_0^x \bigl| u_0(t) R[u_0](t)\bigr|\,dt \le \int_0^x C_1 t^{2l+1}\,dt\le\frac{Cx^{2l+2}}{2l+2}.
\end{equation*}
Assuming that the statement is true for some $n=k$, for $n=k+1$ we obtain
\begin{equation*}
\bigl|\widetilde X^{(2(k+1))}(x)\bigr| \le \int_0^x\left|\frac{\widetilde X^{(2k+1)}(t)}{u_0^2(t)}\right|\,dt
\le \int_0^x\frac{C_2}{t^{2l+2}}\cdot \frac{(k+1)C^{2k+1}t^{2(l+1)+k}}{(2l+2)_{k+1}}\,dt
\le \frac{C^{2k+2}x^{k+1}}{(2l+2)_{k+1}}
\end{equation*}
and
\begin{equation*}
\begin{split}
\bigl|\widetilde X^{(2(k+1)+1)}(x)\bigr| & \le \int_0^x\bigl| u_0(t)R[u_0](t)\widetilde X^{(2k+2)}(t)\bigr|\,dt+\int_0^x\bigl| r_1(t)\widetilde X^{(2k+1)}(t)\bigr|\,dt \\
& \le \int_0^x C_1 t^{2l+1}\cdot \frac{C^{2k+2}t^{k+1}}{(2l+2)_{k+1}}\,dt+\int_0^xC_3\cdot \frac{(k+1)C^{2k+1}t^{2(l+1)+k}}{(2l+2)_{k+1}}\,dt\\
& \le \frac{C^{2k+3}x^{2l+2+k+1}}{(2l+2+k+1)\cdot (2l+2)_{k+1}}+\frac{C^{2k+2}\cdot(k+1)x^{2l+2+k+1}}{(2l+2+k+1)\cdot (2l+2)_{k+1}}\\
&\le \frac{(k+2)C^{2k+3}x^{2(l+1)+k+1}}{ (2l+2)_{k+2}}.
\end{split}
\end{equation*}
Note that the exponents of the powers of $t$ in all the involved integrands are non-negative, hence all the recursive integrals are well defined.
\end{proof}

In the particular case when $r_{1}\equiv0$, i.e., the right hand side of
\eqref{MainEq} does not depend on the derivative of $u$, the estimates of
Lemma \ref{LemmaXtildeEstimate} can be improved, and the following statement
is valid.

\begin{lemma}
\label{LemmaXtildeEstimateZeroR1} Under the conditions of Lemma
\ref{LemmaXtildeEstimate} assume additionally that $r_{1}\equiv0$. Then the
functions $\widetilde{X}^{(n)}$ defined by \eqref{Xtilde} or \eqref{XtildeAlt}
satisfy the  inequalities
\begin{equation}
\bigl|\widetilde{X}^{(2n)}(x)\bigr|\leq\frac{C^{2n}x^{2n}}{2^{2n}%
n!(l+3/2)_{n}},\qquad\bigl|\widetilde{X}^{(2n+1)}(x)\bigr|\leq\frac
{C^{2n+1}x^{2n+1+2(l+1)}}{2^{2n+1}n!(l+3/2)_{n+1}},
\label{XtildeEstimateZeroR1}%
\end{equation}
where $(x)_{n}$ is the Pochhammer symbol and $C=\max\{C_{1},C_{2}\}$, where
the constants $C_{1}$ and $C_{2}$ are such that for any $t\in(0,a]$ the following
inequalities hold
\begin{equation}
\bigl|r_{0}(t)u_{0}^{2}(t)\bigr|\leq C_{1}t^{2l+2},\qquad\left\vert \frac
{1}{u_{0}^{2}(t)}\right\vert \leq C_{2}t^{-2l-2}.
\label{WeightsEstimateZeroR1}%
\end{equation}

\end{lemma}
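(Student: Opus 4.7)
The plan is to prove this by induction on $n$, following the same scheme as in Lemma \ref{LemmaXtildeEstimate}, but exploiting two simplifications that arise when $r_1\equiv 0$. First, under this hypothesis the algebraic manipulation preceding \eqref{XtildeAlt} reduces to $R[u_0\widetilde{X}^{(n-1)}]=R[u_0]\widetilde{X}^{(n-1)}=r_0u_0\widetilde{X}^{(n-1)}$, so the odd-step recursion becomes simply $\widetilde{X}^{(n)}(x)=\int_0^x r_0(t)u_0^2(t)\widetilde{X}^{(n-1)}(t)\,dt$, without the subtracted $r_1 u_0^{-1}\widetilde{X}^{(n-2)}$ term. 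Second, the relevant weight estimate is $|r_0(t)u_0^2(t)|\le C_1 t^{2l+2}$ in \eqref{WeightsEstimateZeroR1}, which gives one extra power of $t$ compared with the bound $|u_0 R[u_0]|\le C_1 t^{2l+1}$ from \eqref{WeightsEstimate}. This extra power is precisely what will upgrade the exponents of $x$ in \eqref{XtildeEstimateZeroR1} from $x^n$ to $x^{2n}$.

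For the base case $n=0$, I would observe directly that $|\widetilde{X}^{(0)}|\le 1$ matches the right-hand side (an empty Pochhammer symbol), while $|\widetilde{X}^{(1)}(x)|\le \int_0^x C_1 t^{2l+2}\,dt=\frac{C x^{2l+3}}{2l+3}$, which coincides with $\frac{Cx^{2(l+1)+1}}{2\cdot 0!\cdot(l+3/2)_1}$.

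For the inductive step, assume both estimates hold for some $n=k$. For the even index $2(k+1)$ I would plug the odd-case hypothesis into the even recursion, cancel the factor $t^{-2l-2}$ from $|1/u_0^2|\le C_2 t^{-2l-2}$ against the factor $t^{2(l+1)}$ inside $\widetilde{X}^{(2k+1)}$, integrate the resulting $t^{2k+1}$, and combine the integration denominator $2k+2=2(k+1)$ with the existing $2^{2k+1}k!$ to produce $2^{2k+2}(k+1)!$. For the odd index $2(k+1)+1$ I would use the single-term odd recursion together with the just-established even bound and the estimate $|r_0 u_0^2|\le C_1 t^{2l+2}$; the integration of $t^{2l+2+2(k+1)}$ produces a denominator factor $2l+2k+5=2(l+k+5/2)$, and the identity $(l+3/2)_{k+2}=(l+3/2)_{k+1}(l+k+5/2)$ together with a factor of $2$ from the doubling in the exponent of $2$ in the denominator yields exactly the claimed bound.

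The only real work is the bookkeeping in the odd step: matching the arithmetic factor $2l+2k+5$ produced by integration against the Pochhammer recursion $(l+3/2)_{k+2}=(l+3/2)_{k+1}(l+k+5/2)$ and the doubling $2^{2k+3}=2\cdot 2^{2k+2}$. Non-negativity of all the integrand exponents (for instance $2k+1\ge 0$ in the even step and $2l+2+2(k+1)\ge 0$ in the odd step, which uses $l\ge-1/2$) guarantees that the integrals are convergent at the origin, so no separate integrability argument is needed.
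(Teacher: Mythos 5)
Your proposal is correct and is exactly the argument the paper intends: the paper dispatches this lemma with the single remark that the proof is analogous to that of Lemma \ref{LemmaXtildeEstimate}, and your induction carries out precisely that analogy, with the bookkeeping (the extra power of $t$ from $|r_0u_0^2|\le C_1t^{2l+2}$, the factor $2k+2=2(k+1)$ absorbed into $2^{2k+2}(k+1)!$, and $2l+2k+5=2(l+k+5/2)$ matched to $(l+3/2)_{k+2}$) all checking out. Nothing further is needed.
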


The proof is analogous to that of Lemma \ref{LemmaXtildeEstimate}.

The following theorem presents the spectral parameter power series (SPPS)
representation of a bounded solution of equation \eqref{MainEq}.

\begin{theorem}
\label{ThmSingularSPPS} Let \eqref{PartSolEq} admit a solution $u_{0}\in
C[0,a]\cap C^{2}(0,a]$ (in general, complex-valued) which does not have other
zeros on $[0,a]$ except at $x=0$ and satisfies the asymptotic relations
\eqref{SolAsymptotic} and \eqref{DerSolAsymptotic}. Then for any $\lambda
\in\mathbb{C}$ the function
\begin{equation}
u=u_{0}\sum_{k=0}^{\infty}\lambda^{k}\widetilde{X}^{(2k)} \label{SPPSsol}%
\end{equation}
is a solution of \eqref{MainEq} belonging to $C[0,a]\cap C^{2}(0,a]$ and the
series converges uniformly on $[0,a]$. The first derivative of $u$ is given
by
\begin{equation}
u^{\prime}=\frac{u_{0}^{\prime}}{u_{0}}u-\frac{1}{u_{0}}\sum_{k=1}^{\infty
}\lambda^{k}\widetilde{X}^{(2k-1)}, \label{SPPSsolDer}%
\end{equation}
and the series for the first and the second derivatives converge uniformly on
an arbitrary compact $K\subset(0,a]$.
\end{theorem}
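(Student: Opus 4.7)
The plan is to verify the SPPS identity term-by-term on the partial sums $u_N=u_0\sum_{k=0}^N \lambda^k \widetilde{X}^{(2k)}$ and then pass to the limit using the estimates from Lemma~\ref{LemmaXtildeEstimate}. Concretely, I aim to establish
\begin{equation*}
L\bigl(u_0\widetilde{X}^{(2k)}\bigr)=R\bigl[u_0\widetilde{X}^{(2k-2)}\bigr]\text{ for }k\ge 1,\qquad L\bigl(u_0\widetilde{X}^{(0)}\bigr)=Lu_0=0,
\end{equation*}
so that multiplication by $\lambda^k$ and summation telescope to the relation $Lu_N=\lambda Ru_{N-1}$ pointwise on $(0,a]$.

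The key computation uses the recursion \eqref{Xtilde}: for an even index $(\widetilde{X}^{(2k)})'=-\widetilde{X}^{(2k-1)}/u_0^2$, and for an odd index $(\widetilde{X}^{(2k-1)})'=u_0 R\bigl[u_0\widetilde{X}^{(2k-2)}\bigr]$. Writing $f=u_0\widetilde{X}^{(2k)}$ one obtains $f'=u_0'\widetilde{X}^{(2k)}-\widetilde{X}^{(2k-1)}/u_0$; differentiating a second time, the two $u_0'\widetilde{X}^{(2k-1)}/u_0^2$ terms cancel exactly, leaving $f''=u_0''\widetilde{X}^{(2k)}-R[u_0\widetilde{X}^{(2k-2)}]$. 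Substituting $u_0''=\bigl(l(l+1)/x^2+q\bigr)u_0$ from \eqref{PartSolEq} into $Lf=-f''+\bigl(l(l+1)/x^2+q\bigr)f$ produces precisely the displayed identity. The formula \eqref{SPPSsolDer} for $u'$ then drops out of the term-by-term differentiation of \eqref{SPPSsol} after replacing $(\widetilde{X}^{(2k)})'$ by $-\widetilde{X}^{(2k-1)}/u_0^2$.

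For convergence, the even-indexed estimate $|\widetilde{X}^{(2k)}(x)|\le C^{2k}a^k/(2l+2)_k$ from Lemma~\ref{LemmaXtildeEstimate} majorizes the $k$-th term of \eqref{SPPSsol} by a series with factorial denominator, which converges uniformly on $[0,a]$ and shows that the sum lies in $C[0,a]$. On any compact $K=[\varepsilon,a]\subset(0,a]$ the factor $1/u_0$ is bounded, and the odd-indexed estimate provides an analogous majorant for \eqref{SPPSsolDer}, giving uniform convergence of $u_N'$ on $K$ to the right-hand side of \eqref{SPPSsolDer}. The pointwise identity $Lu_N=\lambda Ru_{N-1}$ then lets one solve for $u_N''$ and deduce its uniform convergence on $K$; passing to the limit therefore yields $u\in C^2(0,a]$ and $Lu=\lambda Ru$ on $(0,a]$.

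The main obstacle is the singularity at the origin: both $l(l+1)/x^2$ and $1/u_0^2\sim x^{-2(l+1)}$ diverge, so uniform convergence of derivatives cannot be obtained up to $x=0$. The sharp powers of $x$ carried by the estimates of Lemma~\ref{LemmaXtildeEstimate} are engineered precisely to balance these singularities against the decay $u_0\sim x^{l+1}$, ensuring that every recursive integral starts with a nonnegative power of $t$ and so converges. Once this is granted, the analysis on any compact $K\subset(0,a]$ reduces to standard Weierstrass uniform convergence, and continuity of $u$ at $x=0$ follows directly from the $[0,a]$-uniform bound on the even-indexed series.
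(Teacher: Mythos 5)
Your proposal is correct and follows essentially the same route as the paper: both arguments differentiate term-by-term using the recursion \eqref{Xtilde}, observe the cancellation that turns $L\bigl(u_0\widetilde{X}^{(2k)}\bigr)$ into $R\bigl[u_0\widetilde{X}^{(2k-2)}\bigr]$, and invoke the estimates of Lemma~\ref{LemmaXtildeEstimate} to justify uniform convergence on $[0,a]$ for the solution series and on compacts $K\subset(0,a]$ for the differentiated series. Your organization via the telescoped identity $Lu_N=\lambda Ru_{N-1}$ on partial sums is a slightly more explicit packaging of the paper's ``formal differentiation'' step, but the substance is identical.
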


\begin{proof}
Formally differentiating the series \eqref{SPPSsol} twice with the aid of \eqref{XtildeAlt} we obtain that $u'$ should be given by \eqref{SPPSsolDer} and $u''$ (after simplification) by $\frac{u_0''}{u_0}u-\lambda r_0u - \lambda r_1u'$. By Lemma \ref{LemmaXtildeEstimate} all the involved series converge uniformly on an arbitrary compact $K\subset(0,a]$, hence the formal derivatives coincide with the usual ones. Since by \eqref{OpRHS} and \eqref{PartSolEq}
\begin{equation*}
\frac{u_0''}{u_0}u-\lambda r_0u-\lambda r_1u'=\left(\frac{l(l+1)}{x^2}+q(x)\right)u-\lambda R[u],
\end{equation*}
$u$ is indeed a solution of equation \eqref{MainEq}. The relations \eqref{SolAsymptotic} and \eqref{DerSolAsymptotic} follow from the corresponding asymptotics of $u_0$ because by Lemma \ref{LemmaXtildeEstimate} we have $\widetilde X^{(2k)}(x)=o(1)$ and $\frac{1}{u_0}\widetilde X^{(2k-1)}=o(x^l)$ for $k\ge 1$.
\end{proof}

\begin{remark}
For a regular Sturm-Liouville problem the existence and the construction of the
required non-vanishing solution $u_{0}$ present no difficulty since the
equation possesses two linearly independent real-valued solutions $u_{1}$ and $u_{2}$
whose zeros alternate and one may choose $u_0=u_{1}+iu_{2}$ as such solution. For
the singular equation under consideration there is only one solution satisfying the
asymptotic conditions \eqref{SolAsymptotic} and \eqref{DerSolAsymptotic},
see Theorem \ref{ThmSpectralProblem}. Corollary \ref{CorrNonVanishingPS}
establishes that such non-vanishing solution $u_{0}$ exists in the case when
$q(x)\ge0$, $x\in(0,a]$, and in Remark \ref{RmkBddQ} we show that a modified
SPPS representation is always possible in the case when $q$ is real valued and
bounded from below and $r_{1}$ is real valued.
\end{remark}

\begin{remark}
For SPPS representations for solutions of nonsingular Sturm-Liouville
equations we refer to \cite{KrCV08}, \cite{APFT} and \cite{KrPorter2010}. They
have been applied in a number of papers to different scattering and spectral
problems (see references in the Introduction). For the perturbed Bessel
equation, in the case of a real valued potential $q$, $r_{1}\equiv0$ and
$r_{0}\equiv1$ an SPPS representation was obtained and used in
\cite{KosTesh2011} but without formulas for constructing or estimating the coefficients
$\widetilde{X}^{(2k)}$.
\end{remark}

For practical applications the partial sums of the series \eqref{SPPSsol} are
of the main interest. Based on Lemmas \ref{LemmaXtildeEstimate} and
\ref{LemmaXtildeEstimateZeroR1} the following corollary provides estimates for
the difference between the exact solution and the approximate one defined as a
partial sum of the series \eqref{SPPSsol}. The difference is estimated in
terms of the remainders of Taylor series of two special functions.

\begin{corollary}
\label{CorrSPPSDiscrepancy} Under the conditions of Theorem
\ref{ThmSingularSPPS} consider $u_{N}=u_{0}\sum_{k=0}^{N}\lambda^{k}%
\widetilde{X}^{(2k)}$, note that for $N=0$ the right-hand side is equal to
$u_{0}$. Then
\begin{equation}
|u(x)-u_{N}(x)|\leq\max_{t\in\lbrack0,x]}|u_{0}(t)|\cdot\sum_{k=N+1}^{\infty
}\frac{|\lambda|^{k}C^{2k}x^{k}}{(2l+2)_{k}}\leq\max_{t\in\lbrack0,x]}%
|u_{0}(t)|\cdot\biggl|e^{C^{2}|\lambda|x}-\sum_{k=0}^{N}\frac{(C^{2}%
|\lambda|x)^{k}}{k!}\biggr|, \label{EqSPPSsolDiscrepancy}%
\end{equation}
where the constant $C$ is defined in Lemma \ref{LemmaXtildeEstimate}.

Moreover, in the particular case $r_{1}\equiv0$ the following estimate holds
\begin{equation}%
\begin{split}
|u(x)-u_{N}(x)|  &  \leq\max_{t\in\lbrack0,x]}|u_{0}(t)|\cdot\sum
_{k=N+1}^{\infty}\frac{|\lambda|^{k}C^{2k}x^{2k}}{2^{2k}k!(l+3/2)_{k}}\\
&  \leq\max_{t\in\lbrack0,x]}|u_{0}(t)|\cdot\biggl|\frac{2^{l+1/2}%
\Gamma(l+\frac32)}{(|\lambda|^{1/2}Cx) ^{l+1/2}}I_{l+1/2}(|\lambda
|^{1/2}Cx)-\sum_{k=0}^{N}\frac{(C^{2}|\lambda|x^{2})^{k}}{2^{2k}k!(l+3/2)_{k}%
}\biggr|,
\end{split}
\label{EqSPPSsolDiscrepancyZeroR1}%
\end{equation}
where $\Gamma(l+3/2)$ is the Gamma function, $I_{l+1/2}(x)$ is the modified Bessel function of the first kind and the
constant $C$ is defined in Lemma \ref{LemmaXtildeEstimateZeroR1}.
\end{corollary}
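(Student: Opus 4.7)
The plan is to derive both estimates by a direct tail bound on the series \eqref{SPPSsol}, using the explicit coefficient bounds already supplied by Lemmas \ref{LemmaXtildeEstimate} and \ref{LemmaXtildeEstimateZeroR1}, and then to identify the resulting majorant series in closed form.

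For the general case, the starting point is the identity
\[
u(x)-u_N(x)=u_0(x)\sum_{k=N+1}^{\infty}\lambda^{k}\widetilde{X}^{(2k)}(x),
\]
from which I pass to absolute values, bound $|u_0(x)|$ by $\max_{t\in[0,x]}|u_0(t)|$, and insert the estimate $|\widetilde{X}^{(2k)}(x)|\le C^{2k}x^{k}/(2l+2)_k$ of Lemma \ref{LemmaXtildeEstimate}. This yields the first inequality in \eqref{EqSPPSsolDiscrepancy} directly. For the second inequality, I would use that $l\ge -1/2$ implies $2l+2\ge 1$, so
\[
(2l+2)_k=\prod_{j=0}^{k-1}(2l+2+j)\ge\prod_{j=0}^{k-1}(1+j)=k!,
\]
whence $1/(2l+2)_k\le 1/k!$ and the majorant is dominated term-by-term by the tail of the exponential series $e^{C^{2}|\lambda|x}$, which coincides with the quantity $\bigl|e^{C^{2}|\lambda|x}-\sum_{k=0}^{N}(C^{2}|\lambda|x)^{k}/k!\bigr|$ because all terms of that tail are non-negative.

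For the refined estimate in the case $r_{1}\equiv 0$, I repeat the procedure with the sharper bound $|\widetilde{X}^{(2k)}(x)|\le C^{2k}x^{2k}/(2^{2k}k!(l+3/2)_k)$ from Lemma \ref{LemmaXtildeEstimateZeroR1}; this yields the first inequality in \eqref{EqSPPSsolDiscrepancyZeroR1} at once. The only non-routine point is recognizing the full sum as a known closed form. Setting $\nu=l+1/2$ and $z=C|\lambda|^{1/2}x$ in the standard expansion
\[
I_\nu(z)=\sum_{k=0}^{\infty}\frac{(z/2)^{2k+\nu}}{k!\,\Gamma(\nu+k+1)},
\]
using $\Gamma(l+k+3/2)=\Gamma(l+3/2)(l+3/2)_k$, and factoring out $(z/2)^{l+1/2}/\Gamma(l+3/2)$, one obtains
\[
\sum_{k=0}^{\infty}\frac{(C^{2}|\lambda|x^{2})^{k}}{2^{2k}k!(l+3/2)_k}=\frac{2^{l+1/2}\Gamma(l+3/2)}{(|\lambda|^{1/2}Cx)^{l+1/2}}\,I_{l+1/2}(|\lambda|^{1/2}Cx).
\]
Subtracting the initial $N+1$ terms of this series from both sides and combining with the first inequality produces \eqref{EqSPPSsolDiscrepancyZeroR1}.

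Uniform convergence of all the series involved on $[0,a]$ has already been established in Theorem \ref{ThmSingularSPPS}, so the termwise manipulations are legitimate without further justification. There is no genuine obstacle here: the only step beyond bookkeeping is the Pochhammer-to-Bessel identification in the last display, and that is a routine specialization of the series for $I_{\nu}$.
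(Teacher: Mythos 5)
Your proposal is correct and follows essentially the same route as the paper, whose proof is exactly the instruction to compare the even-index majorants from \eqref{XtildeEstimate} and \eqref{XtildeEstimateZeroR1} with the Taylor expansions of the exponential and of $I_{l+1/2}$; your term-by-term comparison via $(2l+2)_k\ge k!$ (valid since $l\ge-\tfrac12$) and the Pochhammer-to-Bessel identification are precisely the details the authors leave to the reader.
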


For the proof one should simply compare the majorizing terms corresponding to
the even indices in \eqref{XtildeEstimate} and \eqref{XtildeEstimateZeroR1}
with the Taylor expansions for the exponential and the Bessel functions
appearing in \eqref{EqSPPSsolDiscrepancy} and \eqref{EqSPPSsolDiscrepancyZeroR1}.

\begin{example}
\label{ExampleBessel} Consider the Bessel equation
\[
-\frac{d^{2}}{dx^{2}}u+\frac{l(l+1)}{x^{2}}u=\lambda u,
\]
with $l\geq-1/2$ and $x\in(0,a]$. The regular solution of this equation
satisfying \eqref{SolAsymptotic} and \eqref{DerSolAsymptotic} is given by the
formula%
\begin{equation}
\label{ExampleBesselSol}u_{l}(x,\lambda)=\Gamma(l+3/2)2^{l+1/2}\lambda
^{-\frac{2l+1}{4}}\cdot\sqrt{x}J_{l+1/2}(\sqrt{\lambda}x),
\end{equation}
where $J_{l+1/2}$ is the Bessel
function of the first kind. This solution may be represented as a power series
in terms of the parameter $\lambda$,%
\begin{equation}
u_{l}(x,\lambda)=x^{l+1}\sum_{k=0}^{\infty}\frac{(-1)^{k}x^{2k}}%
{4^{k}k!(l+3/2)_{k}}\lambda^{k}, \label{ExampleBesselSeries}%
\end{equation}
see \cite{KosTesh2011}.

In order to apply Theorem \ref{ThmSingularSPPS} consider $u_{0}(x)=x^{l+1}$ as
the non-vanishing on $(0,a]$ solution of the equation $-u^{\prime\prime}%
+\frac{l(l+1)}{x^{2}}u=0$, satisfying \eqref{SolAsymptotic} and
\eqref{DerSolAsymptotic}. It is easy to verify that choosing such $u_{0}$ we
obtain from \eqref{XtildeAlt}
\[
\widetilde{X}^{(2n)}(x)=\frac{(-1)^{n}x^{2n}}{4^{n}n!(l+3/2)_{n}},
\]
i.e., exactly the coefficients from \eqref{ExampleBesselSeries}.
\end{example}

\section{Construction of the particular solution $u_{0}$\label{Sect3}}

\label{SectPartSol} In this section we explain how to construct a particular
solution of equation \eqref{PartSolEq} satisfying asymptotics
\eqref{SolAsymptotic}, \eqref{DerSolAsymptotic} and present some sufficient
conditions for this solution to be non-vanishing for $x>0$.

In order to construct an SPPS representation for the particular solution we
rewrite equation \eqref{PartSolEq} in the form
\begin{equation}
y^{\prime\prime}-\frac{l(l+1)}{x^{2}}y=q(x)y. \label{PartSolEqTransf}%
\end{equation}
The equation
\begin{equation}
y_{0}^{\prime\prime}-\frac{l(l+1)}{x^{2}}y_{0}=0 \label{PartSolEqPartEq}%
\end{equation}
possesses two solutions
\[
y_{1}(x)=x^{-l}\qquad\text{and}\qquad y_{2}(x)=x^{l+1}.
\]
For $l\geq-1/2$ the second solution is regular and satisfies
\eqref{SolAsymptotic} and \eqref{DerSolAsymptotic}. Since the potential $q$
may be singular in the origin, we cannot apply Theorem \ref{ThmSingularSPPS}
directly. However we may construct the system of recursive integrals in the
same way as in \eqref{XtildeAlt} and only have to show the convergence of the
integrals and obtain some estimates justifying the SPPS representation.

Consider the following system of recursive integrals
\begin{equation}
\label{YtildePS}%
\begin{split}
\widetilde Y^{(0)}  &  \equiv1,\\
\widetilde Y^{(n)}(x)  &  =
\begin{cases}
\displaystyle\int_{0}^{x}\widetilde Y^{(n-1)}(t) t^{2(l+1)} q(t)\,dt, &
\text{for odd }n,\\
\displaystyle\int_{0}^{x}\widetilde Y^{(n-1)}(t) t^{-2(l+1)}\,dt, & \text{for
even }n.
\end{cases}
\end{split}
\end{equation}

Note that since the potential $q\in C(0,a]$ satisfies condition
\eqref{EqGrowthQ} for some $\alpha> -2$ there exists a constant $C>0$ such
that
\begin{equation}
\label{EstimateQGlobal}|q(x)|\le Cx^{\alpha}\qquad\text{for all }x\in(0,a].
\end{equation}

\begin{lemma}
\label{LemmaYtildeEstimate} Suppose that the complex-valued potential $q\in
C(0,a]$ satisfies inequality \eqref{EstimateQGlobal} for some $C>0$ and
$\alpha>-2$. Then the functions $\widetilde{Y}^{(n)}$ are well defined by
\eqref{YtildePS} and the following estimates hold
\begin{align}
\bigl|\widetilde{Y}^{(2n)}(x)\bigr|  &  \leq\frac{C^{n}x^{n(2+\alpha)}%
}{(2+\alpha)^{2n}n!\bigl(\frac{2l+1}{2+\alpha}+1\bigr)_{n}}%
,\label{YtildeEvenEstimate}\\
\bigl|\widetilde{Y}^{(2n-1)}(x)\bigr|  &  \leq\frac{C^{n}x^{2l+1+n(2+\alpha)}%
}{(2+\alpha)^{2n-1}n!\bigl(\frac{2l+1}{2+\alpha}+1\bigr)_{n}},\qquad
x\in(0,a], \label{YtildeOddEstimate}%
\end{align}
where $(x)_{n}$ is the Pochhammer symbol.
\end{lemma}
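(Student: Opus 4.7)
The plan is to prove both estimates simultaneously by induction on $n$, in parallel with the strategy used in the proof of Lemma~\ref{LemmaXtildeEstimate}. Since the recursion \eqref{YtildePS} defines each $\widetilde Y^{(n)}$ from $\widetilde Y^{(n-1)}$ with alternating parity, the odd and even bounds are naturally coupled: one uses \eqref{YtildeOddEstimate} to pass to \eqref{YtildeEvenEstimate} at the next index, and then \eqref{YtildeEvenEstimate} to pass back to \eqref{YtildeOddEstimate}.

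First I would handle the base case. Here $|\widetilde Y^{(0)}|=1$ gives \eqref{YtildeEvenEstimate} at $n=0$ trivially, and a direct computation using \eqref{EstimateQGlobal} yields $|\widetilde Y^{(1)}(x)|\le C\int_0^x t^{2l+2+\alpha}\,dt = \frac{Cx^{2l+3+\alpha}}{2l+3+\alpha}$. Since $2l+3+\alpha=(2+\alpha)\bigl(\tfrac{2l+1}{2+\alpha}+1\bigr)$, this matches \eqref{YtildeOddEstimate} at $n=1$.

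For the inductive step, in the even-from-odd passage I would substitute the inductive bound for $\widetilde Y^{(2n-1)}$ into the integrand $|\widetilde Y^{(2n-1)}(t)|\,t^{-2(l+1)}$ to dominate it by a constant multiple of $t^{n(2+\alpha)-1}$; integration from $0$ to $x$ produces a factor $x^{n(2+\alpha)}/(n(2+\alpha))$, which supplies exactly the extra $(2+\alpha)$ and the extra $n$ needed to match \eqref{YtildeEvenEstimate}. The odd-from-even passage is analogous: bounding $|\widetilde Y^{(2n)}(t)|\,t^{2(l+1)}|q(t)|$ by a constant multiple of $t^{2l+2+\alpha+n(2+\alpha)}$ and integrating produces a denominator $(2+\alpha)\bigl(\tfrac{2l+1}{2+\alpha}+n+1\bigr)$, which is precisely the factor that extends $\bigl(\tfrac{2l+1}{2+\alpha}+1\bigr)_n$ to $\bigl(\tfrac{2l+1}{2+\alpha}+1\bigr)_{n+1}$.

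Throughout, the hypotheses $\alpha>-2$ and $l\ge-1/2$ guarantee that at every step the exponent of $t$ in each integrand stays above $-1$, so every recursive integral is well defined. I expect the main obstacle to be the bookkeeping of the algebraic factors: aligning the powers of $(2+\alpha)$, the factorials, and the Pochhammer symbols through the coupled induction, in particular verifying the identity $2l+3+\alpha+n(2+\alpha)=(2+\alpha)\bigl(\tfrac{2l+1}{2+\alpha}+n+1\bigr)$ that produces the correct next Pochhammer factor and ensures that the claimed closed-form estimates propagate exactly from one index to the next.
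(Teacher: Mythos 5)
Your overall strategy --- a coupled induction on $n$ alternating between the odd and even bounds, with the observation that every exponent of $t$ in the integrands stays above $-1$ --- is exactly the route the paper takes (its proof is a one-line reduction to the induction of Lemma~\ref{LemmaXtildeEstimate}), and your base case is computed correctly. The problem is the step you defer as ``bookkeeping'': it does not close. In the even-to-odd passage the integration of $t^{2l+2+\alpha+n(2+\alpha)}$ produces the denominator $(2+\alpha)\bigl(\tfrac{2l+1}{2+\alpha}+n+1\bigr)$, which, as you say, supplies one power of $(2+\alpha)$ and extends the Pochhammer symbol from $(\cdot)_n$ to $(\cdot)_{n+1}$ --- but it supplies nothing that upgrades the factorial $n!$ in \eqref{YtildeEvenEstimate} to the $(n+1)!$ required by \eqref{YtildeOddEstimate} at index $2(n+1)-1=2n+1$. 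The spare factor $1/n$ you gain in the odd-to-even passage cannot be recycled to repair this, since $n\cdot n!<(n+1)!$.

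In fact \eqref{YtildeOddEstimate} as printed is not provable: for the extremal potential $q(t)=Ct^{\alpha}$ all the recursive integrals evaluate in closed form, and one finds $\widetilde Y^{(3)}(x)=\frac{C^{2}x^{2l+5+2\alpha}}{(2+\alpha)(2l+3+\alpha)(2l+5+2\alpha)}$, which is exactly twice the right-hand side of \eqref{YtildeOddEstimate} for $n=2$. What the induction actually yields is \eqref{YtildeOddEstimate} with $(n-1)!$ in place of $n!$ --- consistent with Lemma~\ref{LemmaXtildeEstimateZeroR1}, where the odd-index bound for $\widetilde X^{(2n+1)}$ carries $n!$, i.e.\ $(m-1)!$ once the index is rewritten as $2m-1$. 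That corrected bound is precisely what your computation delivers, and it suffices for every later use of the lemma (the relations $\widetilde Y^{(2k-1)}=o(x^{2l+1})$ and the estimate of $u_0-2xu_0'$ in the proof of Theorem~\ref{ThmSpectralProblem}). So you should either prove the weaker, correct bound or flag the misprint; as written, your inductive step fails at the first odd index $n=2$.
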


\begin{proof}The proof can be performed by induction, similarly to the proof of Lemma \ref{LemmaXtildeEstimate}.
The only difference is that it is possible that an exponent of the power of $t$ under the integral sign be negative,
however it is always strictly greater than $-1$. Hence all the involved integrals exist.\end{proof}

\begin{proposition}
\label{PropPartSol} Suppose that the complex-valued potential $q\in C(0,a]$
satisfies inequality \eqref{EstimateQGlobal} for some $C>0$ and $\alpha>-2$.
Then the function
\begin{equation}
u_{0}(x)=x^{l+1}\sum_{k=0}^{\infty}\widetilde{Y}^{(2k)}(x), \label{FnPartSol}%
\end{equation}
where the functions $\widetilde{Y}^{(2k)}$ are defined by \eqref{YtildePS}, is
a particular solution of equation \eqref{PartSolEqTransf} on $(0,a]$
satisfying asymptotics \eqref{SolAsymptotic}, \eqref{DerSolAsymptotic}.
Moreover, $u_{0}$ satisfies the following estimate for any $x\in(0,a]$
\begin{equation}
|u_{0}(x)|\leq\Gamma\left(  \frac{2l+1}{2+\alpha}+1\right)  (2+\alpha
)^{\frac{2l+1}{2+\alpha}}C^{-\frac{2l+1}{4+2\alpha}}\sqrt{x}I_{\frac
{2l+1}{2+\alpha}}\left(  \frac{2\sqrt{Cx^{2+\alpha}}}{2+\alpha}\right)  .
\label{FnPartSolEstimate}%
\end{equation}

\end{proposition}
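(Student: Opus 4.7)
The plan is to mimic the proof of Theorem \ref{ThmSingularSPPS}, treating the series \eqref{FnPartSol} as a formal solution and justifying convergence and term-by-term differentiation via Lemma \ref{LemmaYtildeEstimate}; the asymptotics then drop out of the leading term $x^{l+1}\widetilde Y^{(0)}=x^{l+1}$, and the estimate follows by matching the majorant with the Taylor series of $I_\nu$.

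First, I would use estimate \eqref{YtildeEvenEstimate} directly. Since $2+\alpha>0$, the bound $|\widetilde Y^{(2k)}(x)|\le C^k a^{k(2+\alpha)}/\bigl((2+\alpha)^{2k}k!\,((2l+1)/(2+\alpha)+1)_k\bigr)$ furnishes a Weierstrass majorant on $[0,a]$ whose series converges (it is a modified-Bessel-type series). Thus $v(x):=\sum_{k=0}^\infty \widetilde Y^{(2k)}(x)$ is continuous on $[0,a]$ and $u_0=x^{l+1}v\in C[0,a]$. For derivatives, I would use \eqref{YtildeOddEstimate} on any compact $K\subset(0,a]$, where $t^{-2(l+1)}$ is bounded, to obtain uniform convergence of the series termwise-differentiated.

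Next I would verify that $u_0$ solves \eqref{PartSolEqTransf}. The substitution $u_0=x^{l+1}v$ converts the equation into $(x^{2l+2}v')'=x^{2l+2}q(x)v$. From \eqref{YtildePS} one reads off $(\widetilde Y^{(2k)})'(x)=\widetilde Y^{(2k-1)}(x)\,x^{-2(l+1)}$ and $(\widetilde Y^{(2k-1)})'(x)=\widetilde Y^{(2k-2)}(x)\,x^{2(l+1)}q(x)$, so termwise differentiation (justified above on compacts of $(0,a]$) yields
\begin{equation*}
x^{2(l+1)}v'(x)=\sum_{k=1}^\infty \widetilde Y^{(2k-1)}(x),\qquad \bigl(x^{2(l+1)}v'\bigr)'(x)=x^{2(l+1)}q(x)v(x),
\end{equation*}
which is exactly the transformed equation. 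The asymptotic \eqref{SolAsymptotic} follows from $v(0)=\widetilde Y^{(0)}(0)=1$ and $\widetilde Y^{(2k)}(x)=O(x^{k(2+\alpha)})\to0$ for $k\ge1$. For \eqref{DerSolAsymptotic}, write $u_0'=(l+1)x^l v+x^{l+1}v'$; the first term is $\sim(l+1)x^l$, and using $x^{l+1}v'(x)=x^{-(l+1)}\sum_{k\ge 1}\widetilde Y^{(2k-1)}(x)$ together with \eqref{YtildeOddEstimate} gives $x^{l+1}v'(x)=O(x^{l+2+\alpha})=o(x^l)$.

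Finally, for the estimate \eqref{FnPartSolEstimate} I would sum \eqref{YtildeEvenEstimate}, rewrite the Pochhammer symbol via $((2l+1)/(2+\alpha)+1)_k=\Gamma(\nu+1+k)/\Gamma(\nu+1)$ with $\nu=(2l+1)/(2+\alpha)$, and recognize the result as
\begin{equation*}
|u_0(x)|\le \Gamma(\nu+1)\sum_{k=0}^\infty \frac{C^k x^{l+1+k(2+\alpha)}}{(2+\alpha)^{2k}k!\,\Gamma(\nu+1+k)},
\end{equation*}
which, upon comparing with the Taylor expansion $I_\nu(z)=\sum_k (z/2)^{2k+\nu}/(k!\,\Gamma(k+\nu+1))$ at $z=\tfrac{2}{2+\alpha}\sqrt{Cx^{2+\alpha}}$ and using that $(2+\alpha)\nu/2=l+\tfrac12$, equals the right-hand side of \eqref{FnPartSolEstimate}. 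No serious obstacle is expected; the most delicate point is justifying that the series may be differentiated twice, which is why I would separate uniform convergence on $[0,a]$ (for $u_0$ itself and its continuity/asymptotic at $0$) from uniform convergence on compacts $K\subset(0,a]$ (for $u_0'$ and $u_0''$, where the weight $t^{-2(l+1)}$ is harmless).
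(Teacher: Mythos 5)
Your proposal is correct and follows essentially the same route as the paper's proof: both rely on Lemma \ref{LemmaYtildeEstimate} to justify uniform convergence and termwise differentiation (on $[0,a]$ for $u_0$ itself, on compacts of $(0,a]$ for the derivatives), read the asymptotics off the estimates for $\widetilde Y^{(2k)}$ and $\widetilde Y^{(2k-1)}$, and obtain \eqref{FnPartSolEstimate} by matching the majorant with the Taylor series of $I_\nu$. Your passage through the substitution $u_0=x^{l+1}v$ and the identity $(x^{2l+2}v')'=x^{2l+2}qv$ is just a repackaging of the paper's direct computation of $u_0'$ and $u_0''$ with its cancellation, not a genuinely different argument.
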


\begin{proof}
Due to \eqref{YtildePS} the first and the second derivatives of $u_0$ are given by the expressions
\begin{equation}\label{FnPartSolDer}
u_0' = (l+1)x^l\sum_{k=0}^\infty\widetilde Y^{(2k)}+x^{-l-1}\sum_{k=1}^\infty \widetilde Y^{(2k-1)}
\end{equation}
and
\begin{equation*}
\begin{split}
u_0'' & = (l+1)l x^{l-1}\sum_{k=0}^\infty\widetilde Y^{(2k)}+(l+1)x^{-l-2}\sum_{k=1}^\infty\widetilde Y^{(2k-1)} \\
& -(l+1)x^{-l-2}\sum_{k=1}^\infty\widetilde Y^{(2k-1)} + x^{l+1}q\sum_{k=1}^\infty\widetilde Y^{(2k-2)}  = \frac{(l+1)l}{x^2}u_0 + qu_0.
\end{split}
\end{equation*}
The uniform convergence of all the involved series on an arbitrary compact $K\subset(0,a]$
and hence the possibility of termwise differentiation, follows from Lemma \ref{LemmaYtildeEstimate}.
The asymptotic relations \eqref{SolAsymptotic} and \eqref{DerSolAsymptotic} for the function $u_0$ follow
from the estimates $\widetilde Y^{(2k)} = o(1)$ and $\widetilde Y^{(2k-1)} = o(x^{2l+1})$, $x\to 0$ for $k\ge 1$,
see \eqref{YtildeEvenEstimate} and \eqref{YtildeOddEstimate}. Inequality \eqref{FnPartSolEstimate} follows from the
series representation of the modified Bessel functions of the first kind, see, e.g., \cite{AbramovitzStegun}.
\end{proof}

The following corollary provides a sufficient condition for the particular
solution constructed in Proposition \ref{PropPartSol} to be non-vanishing.

\begin{corollary}
\label{CorrNonVanishingPS} Under the conditions of Proposition
\ref{PropPartSol} assume additionally that $q(x)\ge0$, $x\in(0,a]$. Then
$u_{0}(x)\ge x^{l+1}$ for any $x\in(0,a]$.
\end{corollary}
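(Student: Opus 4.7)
The plan is to show by induction that every function $\widetilde{Y}^{(n)}$ in the recursion \eqref{YtildePS} is nonnegative under the extra hypothesis $q(x)\geq 0$, and then read off the bound from the series \eqref{FnPartSol}.

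First I would observe that when $q\geq 0$, all the weights appearing in \eqref{YtildePS} are nonnegative on $(0,a]$: the factor $t^{2(l+1)}q(t)$ used for odd indices is nonnegative because $l\geq -1/2$ and $q\geq 0$, while the factor $t^{-2(l+1)}$ used for even indices is strictly positive. I would then induct on $n$. The base case is immediate since $\widetilde{Y}^{(0)}\equiv 1$. For the inductive step, if $\widetilde{Y}^{(n-1)}(t)\geq 0$ for all $t\in(0,a]$, then in both cases of \eqref{YtildePS} the integrand is nonnegative, so $\widetilde{Y}^{(n)}(x)\geq 0$ for all $x\in(0,a]$. (By Lemma \ref{LemmaYtildeEstimate} the integrals converge absolutely, so there is no issue with the integrability of the integrands near $0$.)

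Having established that $\widetilde{Y}^{(2k)}(x)\geq 0$ for every $k\geq 0$, I would conclude that
\begin{equation*}
\sum_{k=0}^{\infty}\widetilde{Y}^{(2k)}(x)\;\geq\;\widetilde{Y}^{(0)}(x)\;=\;1,\qquad x\in(0,a],
\end{equation*}
and multiplying both sides by $x^{l+1}>0$ together with the representation \eqref{FnPartSol} yields the desired inequality $u_{0}(x)\geq x^{l+1}$.

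No step is particularly delicate here; the only point to be mindful of is verifying that the inductive nonnegativity argument is compatible with the integrability conditions at the origin, which is guaranteed by Lemma \ref{LemmaYtildeEstimate}. The proof is essentially a two-line induction plus a termwise comparison in the series.
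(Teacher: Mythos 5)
Your proof is correct and follows essentially the same route as the paper: the authors likewise deduce $\widetilde{Y}^{(n)}(x)\ge 0$ for $n\ge 1$ directly from \eqref{YtildePS} and $q\ge 0$, then bound the series in \eqref{FnPartSol} below by its first term. Your version merely makes the induction and the integrability remark explicit, which the paper leaves implicit.
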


\begin{proof}We obtain from \eqref{YtildePS} and the condition $q(x)\ge 0$ that $\widetilde Y^{(n)}(x)\ge 0$, $n\ge 1$. Hence, $u_0(x) = x^{l+1}\bigl( \widetilde Y^{(0)}+\sum_{k=1}^\infty \widetilde Y^{(2k)}\bigr)\ge x^{l+1}$.
\end{proof}

\section{Spectral problems\label{SectSpectrProb}}

The classical formulation (see, e.g., \cite{KosTesh2011}, \cite{Zettl}) of a
spectral problem for a Sturm-Liouville equation of the form $Lu=\lambda ru$
with $L$ from \eqref{OpSingularSL} consists in finding the values of the spectral parameter for which there exists a solution $u(x;\lambda)$ continuous at $x=0$ and satisfying the following conditions. When $l>1/2$,
\begin{equation}
u(0;\lambda)=0 \label{EqBC1}%
\end{equation}
and
\begin{equation}
\beta u(a;\lambda)+\gamma u^{\prime}(a;\lambda)=0 \label{EqBC2}%
\end{equation}
for some $\beta,\gamma\in\mathbb{C}$ such that $|\beta|+|\gamma|\neq0$.

When $l\in\lbrack-1/2,1/2)$ and since the second linearly independent
solution is also square-integrable, an additional boundary condition
\begin{equation}
\lim_{x\rightarrow0}x^{l}\bigl((l+1)u(x;\lambda)-xu^{\prime}(x;\lambda
)\bigr)=0, \label{EqBC3}%
\end{equation}
is imposed (see, e.g., \cite{KosTesh2011}).

Despite the fact that the right-hand side \eqref{OpRHS} of the spectral
equation \eqref{MainEq} may contain the derivative of the unknown function and
hence not fit into the basic Sturm-Liouville scheme, we consider
the same spectral problem \eqref{EqBC1}--\eqref{EqBC3} for equation \eqref{MainEq}. The following statement gives us a characteristic equation of the spectral problem under the condition that an appropriate non-vanishing solution $u_0$ of \eqref{PartSolEq} exists.

\begin{theorem}
\label{ThmSpectralProblem} Let \eqref{PartSolEq} admit a solution $u_{0}\in
C[0,a]\cap C^{2}(0,a]$ (in general, complex-valued) which does not have other
zeros on $[0,a]$ except at $x=0$ and satisfies the asymptotic relations
\eqref{SolAsymptotic} and \eqref{DerSolAsymptotic}. Then the eigenvalues of
the problem \eqref{MainEq}, \eqref{EqBC1}--\eqref{EqBC3} coincide with zeros
of the entire function
\begin{equation}
\Phi(\lambda)=\bigl(\beta u_{0}(a)+\gamma u_{0}^{\prime}(a)\bigr)\sum
_{k=0}^{\infty}\lambda^{k}\widetilde{X}^{(2k)}(a)-\frac{\gamma}{u_{0}(a)}%
\sum_{k=1}^{\infty}\lambda^{k}\widetilde{X}^{(2k-1)}(a), \label{CharEq}%
\end{equation}
where the functions $\widetilde{X}^{(k)}$ are defined by \eqref{Xtilde} or \eqref{XtildeAlt}.
\end{theorem}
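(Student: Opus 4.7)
The plan is to identify the SPPS solution $u(\cdot;\lambda)$ from Theorem \ref{ThmSingularSPPS} as the unique (up to a scalar) solution of \eqref{MainEq} satisfying the boundary condition(s) at the singular endpoint $x=0$, and then to translate the remaining boundary condition at $x=a$ into the equation $\Phi(\lambda)=0$.

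First I would verify that $u(\cdot;\lambda)$ automatically satisfies \eqref{EqBC1} when $l>1/2$, and \eqref{EqBC3} when $l\in[-1/2,1/2)$. The former is immediate from $u\sim x^{l+1}$. For \eqref{EqBC3}, since $u\sim x^{l+1}$ and $u'\sim(l+1)x^{l}$, the two leading contributions to $(l+1)u-xu'$ cancel, and using the tail estimates \eqref{XtildeEstimate} one gets $u-u_{0}=O(x^{l+2})$ and $(1/u_{0})\widetilde{X}^{(2k-1)}=o(x^{l})$ for $k\ge 1$, hence $x^{l}\bigl((l+1)u-xu'\bigr)=o(1)$ as $x\to 0$.

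Next I would prove uniqueness: any solution of \eqref{MainEq} satisfying the boundary conditions at $x=0$ is a scalar multiple of $u(\cdot;\lambda)$. On $(0,a]$ the equation has a two-dimensional solution space, so it suffices to produce a second, linearly independent solution $\tilde u(\cdot;\lambda)$ and to check that it fails the boundary conditions at $0$. Rewriting \eqref{MainEq} as a perturbation of $-v''+l(l+1)v/x^{2}=0$, whose indicial roots are $l+1$ and $-l$, one can adapt the construction of Section \ref{SectPartSol} by starting from the second homogeneous solution $y_{1}(x)=x^{-l}$ of \eqref{PartSolEqPartEq} to build a solution $\tilde u_{0}\sim x^{-l}$ of $Lu=0$, and then run the iteration of Theorem \ref{ThmSingularSPPS} with $\tilde u_{0}$ in place of $u_{0}$ to produce a solution $\tilde u(x;\lambda)\sim x^{-l}$ of \eqref{MainEq}. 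For $l>1/2$ the function $x^{-l}$ is unbounded at $0$, so mere continuity of the eigenfunction at $0$ rules out any $\tilde u$-component. For $l\in(-1/2,1/2)$, a direct computation gives $x^{l}\bigl((l+1)\tilde u-x\tilde u'\bigr)\to 2l+1\neq 0$, so \eqref{EqBC3} is violated. The borderline $l=-1/2$, where the indicial roots coincide, would be handled by the standard reduction-of-order construction, yielding a logarithmic companion which still fails \eqref{EqBC3}.

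With uniqueness in hand, $\lambda$ is an eigenvalue if and only if the SPPS solution itself satisfies \eqref{EqBC2}. Substituting \eqref{SPPSsol} and \eqref{SPPSsolDer} into $\beta u(a;\lambda)+\gamma u'(a;\lambda)=0$ and using $u_{0}(a)\neq 0$ yields exactly $\Phi(\lambda)=0$. Finally, $\Phi$ is entire because Lemma \ref{LemmaXtildeEstimate} gives $|\widetilde{X}^{(2k)}(a)|\le C^{2k}a^{k}/(2l+2)_{k}$ and an analogous bound for $\widetilde{X}^{(2k-1)}(a)$; the factorial growth of the Pochhammer denominator ensures that both series in \eqref{CharEq} have infinite radius of convergence in $\lambda$.

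The main obstacle is the uniqueness step, specifically the rigorous construction of the second linearly independent solution with the prescribed $x^{-l}$ asymptotic under the weak hypotheses on $q$ (complex-valued, $q(x)=O(x^{\alpha})$ for some $\alpha>-2$) and in the presence of the $\lambda r_{1}u'$ term. The SPPS-type construction of Section \ref{SectPartSol} would need to be adapted with convergence estimates that account for the companion particular solution being singular at $0$; additional care is required in the resonant case $l=-1/2$ where logarithmic terms appear.
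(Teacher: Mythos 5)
Your overall architecture matches the paper's: show the SPPS solution satisfies the conditions at $x=0$, show any linearly independent companion fails them, and then read off \eqref{EqBC2} as $\Phi(\lambda)=0$ (your verification of \eqref{EqBC3} for $u$ via the tail estimates, and your argument for entirety of $\Phi$, are exactly right). However, the uniqueness step --- which is the real content of the theorem --- rests on a construction that does not work as stated, and you yourself flag it as unresolved. Building the singular companion $\tilde u_0\sim x^{-l}$ by running the recursive integrals of Section \ref{SectPartSol} from the origin with $y_1(x)=x^{-l}$ in place of $x^{l+1}$ requires integrands of the form $t^{-2l}q(t)=O(t^{-2l+\alpha})$ to be integrable at $0$, i.e.\ $\alpha>2l-1$; this fails for all sufficiently large $l$ (already for $l=1$, $\alpha=0$), so the proposed SPPS iteration for $\tilde u$ diverges precisely in the regime where you need it. Moreover, even where the iteration converges, the asymptotics $\tilde u'\sim -l x^{-l-1}$ that you use to test \eqref{EqBC3} for $-1/2<l<0$ is asserted rather than derived.

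The paper circumvents all of this by never integrating from the singular endpoint: on an interval $(0,b]$ where the already-constructed $u(\cdot;\lambda)$ does not vanish, it takes the reduction-of-order (Liouville) formula $u_2(x)=-u(x)\int_x^b p(t)u^{-2}(t)\,dt$ with $p(x)=e^{-\lambda\int_0^x r_1}$, obtains $u_2(x)\sim -x^{-l}/(2l+1)$ by L'Hospital's rule, and then gets the derivative asymptotics $u_2'(x)\sim \frac{l}{2l+1}x^{-l-1}$ from the Abel/Wronskian identity $uu_2'-u'u_2=p$ rather than from any series representation. This handles every $l>-1/2$ uniformly (unboundedness of $x^{-l}$ kills continuity at $0$ for $l>0$, and the limit of $x^l((l+1)u_2-xu_2')$ is $-1\neq0$ for $-1/2<l<0$), and the same device yields the logarithmic asymptotics $u_2\sim-\sqrt{x}\ln x$ in the resonant case $l=-1/2$, where an additional computation using \eqref{FnPartSol}--\eqref{FnPartSolDer} shows $u_0-2xu_0'=O(x^{1/2+2+\alpha})$ and hence that $u_2$ violates \eqref{EqBC3}. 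To repair your argument you would need to replace the from-the-origin iteration by an integral representation anchored at an interior point, which is essentially the paper's route.
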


\begin{proof}
Under the condition of the theorem, the function $u$ defined by \eqref{SPPSsol} is a solution of \eqref{MainEq} satisfying the boundary conditions \eqref{EqBC1} and \eqref{EqBC3}. The boundary condition \eqref{EqBC2} for the function $u$ coincides with $\Phi(\lambda)=0$, and $\Phi(\lambda)$ is an entire function by Theorem \ref{ThmSingularSPPS}.
It is left to show that there are no more eigenvalues. For that it is sufficient to show that the second linearly independent solution $u_2$ of \eqref{MainEq} does not satisfy either \eqref{EqBC1} or \eqref{EqBC3}.
We rewrite \eqref{MainEq} in the form
\begin{equation}\label{MainEq2}
-u''-\lambda r_1 u'+\Bigl(\frac{l(l+1)}{x^2} + q - \lambda r_0\Bigr) u = 0
\end{equation}
and introduce the function
\begin{equation*}
p(x)=e^{-\lambda\int_0^x r_1(t)\,dt}.
\end{equation*}
Note that
\begin{equation}\label{AsymptP}
\lim_{x\to 0} p(x)=1.
\end{equation}
Since the solution $u$ defined by \eqref{SPPSsol} satisfies the asymptotic condition \eqref{SolAsymptotic},
there exists a constant $b=b(\lambda)>0$ such that $u(x)\ne 0$ for all $x\in(0,b]$. Then the second linearly independent
solution is given by the Liouville formula \cite[Chap. XI]{Hartman}, \cite{RajStoj2007}
\begin{equation*}
u_2(x) = -u(x)\int_x^b\frac{p(t)}{u^2(t)}\,dt,\qquad x\in(0,b].
\end{equation*}
Assume first that $l>-1/2$. It follows from \eqref{AsymptP}, asymptotics \eqref{SolAsymptotic}, \eqref{DerSolAsymptotic}
and L'Hospital's rule that
\begin{equation}\label{Sol2Asympt}
u_2(x)\sim -\frac{x^{-l}}{2l+1},\qquad x\to 0.
\end{equation}
Hence for $l\ge 0$ the second solution $u_2$ does not satisfy the boundary condition \eqref{EqBC1}. Let now $-1/2<l<0$.
Recall that by Abel's identity \cite[Chap. XI]{Hartman} the Wronskian of $u$ and $u_2$ has the form
\begin{equation}\label{Wronskian}
W=uu_2'-u'u_2 = p.
\end{equation}
Hence using \eqref{SolAsymptotic}, \eqref{DerSolAsymptotic}, \eqref{AsymptP} and \eqref{Sol2Asympt} we obtain
\begin{equation}\label{DelSol2Asympt}
u_2'(x)\sim \frac{l}{2l+1} x^{-l-1},\qquad x\to 0,
\end{equation}
and thus observe that $u_2$ cannot satisfy the boundary condition \eqref{EqBC3}.

For $l=-1/2$ similar reasoning shows that
\begin{equation}\label{L1/2Sol2Asympt}
u_2(x)\sim -\sqrt{x}\ln x,\qquad x\to 0.
\end{equation}
We substitute the expression for $u_2'$ obtained from \eqref{Wronskian} into \eqref{EqBC3} and obtain
\begin{equation*}
x^{-1/2}\Bigl(\frac 12u_2 - xu_2'\Bigr) = -\sqrt{x}\frac{p}{u} + \frac{u_2}{u}\cdot\frac{u-2xu'}{2\sqrt{x}}.
\end{equation*}
For the first term in this expression,  from \eqref{AsymptP} and \eqref{SolAsymptotic} we have
\begin{equation*}
\lim_{x\to 0} \sqrt{x}\frac{p(x)}{u(x)} = 1,
\end{equation*}
hence to prove that $u_2$ does not satisfy the boundary condition \eqref{EqBC3} it is sufficient to show that the second term is $o(1)$ as $x\to 0$. Due to the asymptotic relations \eqref{SolAsymptotic} and \eqref{L1/2Sol2Asympt} it is sufficient to show that $u-2xu' = O(x^{1/2+\varepsilon})$ for some $\varepsilon>0$. Taking into account \eqref{SPPSsolDer} we have
\begin{equation*}
u-2xu'=\frac u{u_0}\bigl(u_0-2x u_0'\bigr)+\frac{2x}{u_0}\sum_{k=1}^\infty \lambda^k \widetilde X^{(2k-1)}.
\end{equation*}
As can be seen from \eqref{XtildeEstimate}, $\frac{2x}{u_0}\sum_{k=1}^\infty \lambda^k \widetilde X^{(2k-1)}=o(x)$, hence only the first term is relevant. Since $u\sim u_0\sim \sqrt x$ as $x\to 0$, it is sufficient to prove that $u_0-2xu_0' = O(x^{1/2+\varepsilon})$.
Similarly to \eqref{L1/2Sol2Asympt} we obtain that the general solution of equation \eqref{PartSolEq} can be represented as $c_1\tilde u_1+c_2\tilde u_2$, where $\tilde u_1$ is given by \eqref{FnPartSol} and satisfies the asymptotic condition \eqref{SolAsymptotic} and $\tilde u_2$ satisfies the asymptotic condition $\tilde u_2\sim \sqrt{x}\ln x$, $x\to 0$. Since $u_0\sim \sqrt{x}$ by the statement of the theorem, it is necessarily of the form \eqref{FnPartSol}. Now using \eqref{FnPartSol}, \eqref{FnPartSolDer} and \eqref{YtildeOddEstimate} we obtain
\begin{equation*}
u_0-2xu_0' = \sqrt x\sum_{k=0}^\infty \widetilde Y^{(2k)} - \frac {2x}{2\sqrt x}\sum_{k=0}^\infty \widetilde Y^{(2k)} - 2\sqrt x\sum_{k=1}^\infty \widetilde Y^{(2k-1)} = O(x^{1/2+2+\alpha}),
\end{equation*}
where $\alpha$ participates in \eqref{EqGrowthQ}, and this finishes the proof for $l=-1/2$.
\end{proof}

\section{Spectral shift technique\label{SectSpShift}}

The SPPS representation given in Theorem \ref{ThmSingularSPPS} is based on a
particular solution of equation \eqref{MainEq} for $\lambda=0$. In
\cite{KrPorter2010} it was mentioned that for a classic Sturm-Liouville
equation it is also possible to construct the SPPS representation of a general
solution starting from a non-vanishing particular solution for some
$\lambda=\lambda_{0}$. Such procedure is called spectral shift and has already
proven its usefulness for numerical applications \cite{KrPorter2010},
\cite{KKB2013}.

We show that a spectral shift technique may be applied to equation
\eqref{MainEq}. Let $\lambda_{0}$ be a fixed complex number. We rewrite
\eqref{MainEq} in the form
\begin{equation}
-u^{\prime\prime}-\lambda_{0}r_{1}u^{\prime}+\left(  \frac{l(l+1)}{x^{2}%
}+q-\lambda_{0}r_{0}\right)  u=\widetilde{\lambda}\bigl(r_{1}u^{\prime
}+r_{0}u\bigr), \label{MainEqSpShift}%
\end{equation}
where $\widetilde{\lambda}:=\lambda-\lambda_{0}$. Suppose that $u_{0}$ is a
solution of the equation
\begin{equation}
L_{0}u:=-u^{\prime\prime}-\lambda_{0}r_{1}u^{\prime}+\left(  \frac
{l(l+1)}{x^{2}}+q-\lambda_{0}r_{0}\right)  u=0 \label{PartSolEqSpShift}%
\end{equation}
such that $u_{0}$ does not vanish on $(0,a]$. Note that $u_{0}$ is a
particular solution of \eqref{MainEq} for $\lambda=\lambda_{0}$. Then the
operator $L_{0}$ admits the following P\'{o}lya factorization
\cite{Polya1924}
\begin{equation}
L_{0}u=-\frac{1}{pu_{0}}\frac{d}{dx}pu_{0}^{2}\frac{d}{dx}\frac{u}{u_{0}},
\label{Lfactorization}%
\end{equation}
where
\begin{equation}
p(x)=e^{\lambda_{0}\int_{0}^{x}r_{1}(s)\,ds}. \label{PolyaP}%
\end{equation}
Based on the factorization \eqref{Lfactorization} we introduce the following
system of recursive integrals
\begin{equation}%
\begin{split}
\widetilde{Z}^{(0)}  &  \equiv1,\qquad\widetilde{Z}^{(-1)}\equiv0,\\
\widetilde{Z}^{(n)}(x)  &  =%
\begin{cases}
\displaystyle\int_{0}^{x}p(t)u_{0}(t)R\bigl[u_{0}(t)\widetilde{Z}%
^{(n-1)}(t)\bigr]\,dt, & \text{if }n\text{ is odd},\\
-\displaystyle\int_{0}^{x}\frac{\widetilde{Z}^{(n-1)}(t)}{p(t)u_{0}^{2}%
(t)}\,dt, & \text{if }n\text{ is even}.
\end{cases}
\end{split}
\label{Ztilde}%
\end{equation}
Similarly to \eqref{XtildeAlt} the recurrent relation \eqref{Ztilde} can be
rewritten as
\begin{equation}
\widetilde{Z}^{(n)}(x)=%
\begin{cases}
\displaystyle\int_{0}^{x}\bigl(p(t)u_{0}(t)R[u_{0}](t)\widetilde{Z}%
^{(n-1)}(t)-r_{1}(t)\widetilde{Z}^{(n-2)}(t)\bigr)\,dt, & \text{if }n\text{ is
odd},\\
-\displaystyle\int_{0}^{x}\frac{\widetilde{Z}^{(n-1)}(t)}{p(t)u_{0}^{2}%
(t)}\,dt, & \text{if }n\text{ is even}.
\end{cases}
\label{ZtildeAlt}%
\end{equation}

\begin{theorem}
\label{ThmSpectralShift} Let \eqref{PartSolEqSpShift} admit a solution
$u_{0}\in C[0,a]\cap C^{2}(0,a]$ (in general, complex-valued) which does not
have other zeros on $[0,a]$ except at $x=0$ and satisfies the asymptotic
relations \eqref{SolAsymptotic} and \eqref{DerSolAsymptotic}. Then for any
$\lambda\in\mathbb{C}$ the function
\begin{equation}
u=u_{0}\sum_{k=0}^{\infty}(\lambda-\lambda_{0})^{k}\widetilde{Z}^{(2k)}
\label{SPPSsolSpShift}%
\end{equation}
is a solution of \eqref{MainEq} belonging to $C[0,a]\cap C^{2}(0,a]$, and the
series \eqref{SPPSsolSpShift} converges uniformly on $[0,a]$. The series for
the first and the second derivatives converge uniformly on an arbitrary
compact $K\subset(0,a]$ and the first derivative has the form
\begin{equation}
u^{\prime}=\frac{u^{\prime}_{0}}{u_{0}} u - \frac{1}{pu_{0}}\sum_{k=1}%
^{\infty}(\lambda-\lambda_{0})^{k}\widetilde{Z}^{(2k-1)}= u_{0}^{\prime}%
+\sum_{k=1}^{\infty}(\lambda-\lambda_{0})^{k}\biggl(\widetilde{Z}^{(2k)}%
-\frac{\widetilde{Z}^{(2k-1)}}{pu_{0}}\biggr) . \label{SPPSdersolSpShift}%
\end{equation}

\end{theorem}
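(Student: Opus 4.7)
The plan is to run the proof of Theorem \ref{ThmSingularSPPS} almost verbatim, with the P\'olya factorization \eqref{Lfactorization} playing the role that the identity $-(u_0^2(u/u_0)')' / u_0 = Lu$ plays in the unshifted setting. Since $r_1\in C[0,a]$, the weight $p$ defined by \eqref{PolyaP} lies in $C[0,a]$ with $p(0)=1$, so there exist positive constants $p_{\min},p_{\max}$ with $p_{\min}\leq|p(x)|\leq p_{\max}$ for all $x\in[0,a]$. This means that $pu_0$ still obeys \eqref{SolAsymptotic}, \eqref{DerSolAsymptotic} up to a bounded multiplicative factor and $1/(pu_0^2)$ is controlled by $|1/u_0^2|$ in the same way.

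First I would establish a $\widetilde{Z}$-analogue of Lemma \ref{LemmaXtildeEstimate}. The induction is identical to the one carried out there, the only modification being that in \eqref{WeightsEstimate} one replaces $|u_0R[u_0]|$ by $|pu_0R[u_0]|\leq p_{\max}C_1t^{2l+1}$ and $|1/u_0^2|$ by $|1/(pu_0^2)|\leq p_{\min}^{-1}C_2t^{-2l-2}$. Absorbing these factors into a redefined constant $\widetilde{C}$ yields bounds of exactly the form \eqref{XtildeEstimate} for $\widetilde{Z}^{(n)}$. As in the unshifted case, these bounds imply uniform convergence of the series \eqref{SPPSsolSpShift} on all of $[0,a]$ and of the series $\sum(\lambda-\lambda_0)^k\widetilde{Z}^{(2k-1)}/(pu_0)$ and $\sum(\lambda-\lambda_0)^k\widetilde{Z}^{(2k-1)}/(pu_0^2)$ uniformly on every compact $K\subset(0,a]$.

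Next I would verify that \eqref{SPPSsolSpShift} solves \eqref{MainEq}. Write $u=u_0v$ with $v=\sum_{k=0}^\infty(\lambda-\lambda_0)^k\widetilde{Z}^{(2k)}$. Termwise differentiation, justified by the uniform convergence just established, together with the even-index recursion in \eqref{Ztilde} gives $v'=-\frac{1}{pu_0^2}\sum_{k=1}^\infty(\lambda-\lambda_0)^k\widetilde{Z}^{(2k-1)}$, so $pu_0^2v'=-\sum_{k=1}^\infty(\lambda-\lambda_0)^k\widetilde{Z}^{(2k-1)}$. Differentiating once more and using the odd-index recursion yields
\begin{equation*}
(pu_0^2v')'=-\sum_{k=1}^\infty(\lambda-\lambda_0)^k\,pu_0\,R\bigl[u_0\widetilde{Z}^{(2k-2)}\bigr].
\end{equation*}
Substituting into the P\'olya factorization \eqref{Lfactorization} gives $L_0(u_0v)=(\lambda-\lambda_0)R[u_0v]$, which is precisely \eqref{MainEqSpShift} and hence \eqref{MainEq}. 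The identity $u'=u_0'v+u_0v'$ together with $v=u/u_0$ and the formula for $v'$ above yields the first expression for $u'$ in \eqref{SPPSdersolSpShift}; substituting the series for $u$ in the term $(u_0'/u_0)u$ and using $\widetilde{Z}^{(0)}=1$ gives the second expression.

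The only real obstacle is the careful handling of the endpoint $x=0$: both $1/u_0$ and $1/(pu_0^2)$ blow up there, and one must show that the factors $\widetilde{Z}^{(2k-1)}$ decay fast enough at the origin to compensate so that the termwise differentiations used above are licit on the open interval and the limiting asymptotics \eqref{SolAsymptotic}, \eqref{DerSolAsymptotic} for $u$ are preserved. This is settled entirely by the $\widetilde{Z}$-analogue of \eqref{XtildeEstimate}, which encodes the correct powers of $x$; everything else is algebra parallel to the proof of Theorem \ref{ThmSingularSPPS}.
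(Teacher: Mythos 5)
Your proposal is correct and follows essentially the same route as the paper's own (much terser) proof: establish the $\widetilde{Z}^{(n)}$ analogue of the estimates \eqref{XtildeEstimate} with constants adjusted for the bounded, non-vanishing weight $p$, then apply $L_{0}$ through the P\'olya factorization \eqref{Lfactorization} and justify the termwise differentiations and the asymptotics at the origin by those estimates. The only extra care needed is in the final algebraic rewriting of $u'$, where expanding $(u_{0}'/u_{0})u$ produces $u_{0}'\widetilde{Z}^{(2k)}$ rather than $\widetilde{Z}^{(2k)}$ in the summand, but this does not affect the substance of the argument.
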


\begin{proof}
Note that the function $p$ given by \eqref{PolyaP} is continuous and non-vanishing on $[0,a]$. Similarly to the proof of Lemma \ref{LemmaXtildeEstimate} we see that the functions $\widetilde Z^{(n)}$ satisfy estimates \eqref{XtildeEstimate} with the constant
\begin{equation}\label{ConstantSpShift}
C = \max\bigl\{1, C_1, C_2, C_3\bigr\},
\end{equation}
where
\begin{equation*}
C_1 = \sup_{t\in(0,a]}\frac{\bigl| p(t)u_0(t) R[u_0](t)\bigr|}{t^{2l+1}},\qquad
C_2 = \max_{t\in[0,a]}\frac{t^{2l+2}}{\bigl|p(t)u_0^2(t)\bigr|},\qquad
C_3 = \max_{t\in[0,a]}\bigl|r_1(t)\bigr|.
\end{equation*}
Formal application of the operator $L_0$ to \eqref{SPPSsolSpShift} with the use of P\'{o}lya factorization \eqref{Lfactorization} and formula \eqref{Ztilde} shows that the function $u$ is a solution of equation \eqref{MainEqSpShift} and hence of \eqref{MainEq}. Estimates for $\widetilde Z^{(n)}$ justify the possibility of differentiation of the involved series and show that the function $u$ satisfies \eqref{SolAsymptotic} and \eqref{DerSolAsymptotic}.
\end{proof}

\begin{remark}
\label{RmkBddQ} Suppose that the functions $q$ and $r_{1}$ are real-valued and
that the potential $q$ is bounded from below, i.e., there exists a constant
$q_{0}\in\mathbb{R}$ such that
\[
q(x)\ge q_{0}\qquad\text{for all }x\in(0,a].
\]
Consider equation \eqref{PartSolEqSpShift} for $\lambda_{0}=q_{0}$. The
particular solution of this equation can be constructed similarly to Section
\ref{SectPartSol} using the generalization of formulas \eqref{YtildePS}
according to the factorization \eqref{Lfactorization}, cf., \eqref{Xtilde} and
\eqref{Ztilde}. Since the functions $p$ and $q-q_{0}$ are non-negative,
similarly to Corollary \ref{CorrNonVanishingPS} we deduce that the particular
solution $u_{0}$ in such case does not have other zeros on $[0,a]$ except at
$x=0$. Hence it is possible to construct the SPPS representation of the
bounded solution for any equation \eqref{MainEq} having a real-valued $r_{1}$
and a real-valued bounded from below $q$.
\end{remark}

\begin{remark}
In the case when $r_{1}\equiv0$, we do not need to introduce the new system of
functions $\widetilde Z^{(n)}$. The system of functions $\widetilde X^{(n)}$
given by \eqref{XtildeAlt} may be used directly in representation \eqref{SPPSsolSpShift}.
\end{remark}

\begin{remark}
For the difference $\max_{x\in\lbrack0,a]}|u(x)-u_{N}(x)|$, where $u_{N}%
=u_{0}\sum_{k=0}^{N}\lambda^{k}\widetilde{Z}^{(2k)}$ the estimate
\eqref{EqSPPSsolDiscrepancy} holds, where $C$ is given by \eqref{ConstantSpShift}.
\end{remark}

\section{Transmutation operators for perturbed Bessel operators}\label{SectTransmut}

We recall a general definition of a transmutation operator from \cite{KT
Obzor} which is a modification of the definition given by Levitan
\cite{LevitanInverse}. Let $E$ be a linear topological space and $E_{1}$ its
linear subspace (not necessarily closed). Let $A$ and $B$ be linear operators:
$E_{1}\rightarrow E$.

\begin{definition}
\label{DefTransmut} A linear invertible operator $T$ defined on the whole $E$
such that $E_{1}$ is invariant under the action of $T$ is called a
transmutation operator for the pair of operators $A$ and $B$ if it fulfills
the following two conditions.

\begin{enumerate}
\item Both the operator $T$ and its inverse $T^{-1}$ are continuous in $E$;

\item The following operator equality is valid
\[
AT=TB
\]
or which is the same
\[
A=TBT^{-1}.
\]

\end{enumerate}
\end{definition}

Very often in literature the transmutation operators (the term coined by  Delsarte and Lions \cite{DelsarteLions1956})  are called transformation operators.

In \cite{CKT} for the case of the transmutation operator $T$ corresponding to the pair of operators $A=-\frac
{d^{2}}{dx^{2}}+q(x)$ and $B=-\frac{d^{2}}{dx^{2}}$ a mapping property of $T$ was found. It establishes what is the result of action of $T$ on the powers of the independent variable. This mapping property already found many applications in the proofs of
completeness of infinite systems of solutions of some linear partial differential equations and in solving initial and
spectral problems, see \cite{CKM}, \cite{CKT}, \cite{KKTT}, \cite{KT MMET
2012}, \cite{KT Transmut}, \cite{KT NumTrans}. We present an analogue of
the aforementioned mapping property for the transmutation operator for the pair of
operators $A=-\frac{d^{2}}{dx^{2}}+q(x)+\frac{l(l+1)}{x^{2}}$ and
$B=-\frac{d^{2}}{dx^{2}}+\frac{l(l+1)}{x^{2}}$, constructed in
\cite{Stashevskaya}, \cite{Volk}.

We recall some results from \cite{Stashevskaya}, \cite{Volk}. Under certain
additional conditions on the potential $q$, discussed below, a bounded
solution of the equation
\begin{equation}
\label{EqSingVolk}-y^{\prime\prime}+\Bigl(q(x)+\frac{l(l+1)}{x^{2}}\Bigr)y =
\lambda y
\end{equation}
can be represented in the form
\begin{equation}
\label{TransmutSingular}y(x,\lambda) = j_{l+1/2}(x,\lambda)+\int_{0}^{x}
K(x,t) j_{l+1/2}(t,\lambda)\,dt,
\end{equation}
where $j_{l+1/2}(x,\lambda) = \sqrt{x\sqrt{\lambda}}J_{l+1/2}(x\sqrt{\lambda
})$ is a solution of the equation
\begin{equation}
\label{EqSingVolkWOpotential}-y^{\prime\prime}+\frac{l(l+1)}{x^{2}}y = \lambda
y
\end{equation}
and $J_{l+1/2}$ is the Bessel function of the first kind. The integral kernel
$K$ is the solution of the partial differential equation
\[
\frac{\partial^{2}K(x,t)}{\partial x^{2}}-\frac{l(l+1)}{x^{2}}K(x,t) -
q(x)K(x,t) = \frac{\partial^{2}K(x,t)}{\partial t^{2}}-\frac{l(l+1)}{t^{2}%
}K(x,t)
\]
satisfying the boundary conditions
\[
\frac{dK(x,x)}{dx}=\frac12 q(x)\quad\text{and}\quad\lim_{t\to0} K(x,t)\cdot
t^{l} = 0.
\]
Moreover, the integral kernel $K$ satisfies
\begin{equation}
\label{KinL2}\sup_{0\le x\le a}\int_{0}^{x} |K(x,t)|^{2}\,dt<\infty.
\end{equation}

We denote the operator defined by \eqref{TransmutSingular} as $\mathbf{T}$.
The existence of such operator was established in \cite{Volk} for the case
when $q$ is a continuous function on $[0,a]$ and in \cite{Stashevskaya} for
the case when $l$ is an integer and $q$ is a real-valued function satisfying the
condition $\int_{0}^{a} t^{m}|q(t)|\,dt<\infty$ for some $0<m<1/2$. It is
mentioned in \cite{Sitnik} that the results of \cite{Akhiezer1957} allow one to
extend the existence of the operator $\mathbf{T}$ onto arbitrary real values of
the parameter $l$.

The solution $y(x,\lambda)$ in \eqref{TransmutSingular} differs from the
solution $u(x,\lambda)$ satisfying the asymptotic condition \eqref{SolAsymptotic} by the
factor $\frac{\lambda^{(l+1)/2}}{2^{l+1/2}\Gamma(l+3/2)}$, see
\cite{Stashevskaya}. The series expansion of the function $j_{l+1/2}%
(x,\lambda)$ is
\begin{equation}
\label{jSeries}j_{l+1/2}(x,\lambda) = \sum_{k=0}^{\infty}c_{k} x^{2k+l+1}%
,\qquad\text{where } c_{k}=c_{k}(\lambda) = \frac{(-1)^{k}\lambda^{k+(l+1)/2}%
}{\Gamma(k+1)\Gamma(k+l+3/2)2^{2k+l+1/2}}.
\end{equation}
Similarly to \cite[Theorem 7]{CKT} we substitute
the functions $y(x,\lambda)$ and $j_{l+1/2}(x,\lambda)$ in \eqref{TransmutSingular} by their series expansions \eqref{SPPSsol} and \eqref{jSeries} and obtain
\begin{equation}
\label{EqForMappingProperty}%
\begin{split}
\frac{\lambda^{(l+1)/2}}{2^{l+1/2}\Gamma(l+\frac32)} u_{0}(x)\sum
_{k=0}^{\infty}\lambda^{k} \widetilde X^{(2k)}(x)=  &  \sum_{k=0}^{\infty
}c_{k}x^{2k+l+1} + \int_{0}^{x}\biggl( K(x,t)\sum_{k=0}^{\infty}%
c_{k}t^{2k+l+1}\biggr)\,dt\\
=  &  \sum_{k=0}^{\infty}c_{k}\biggl(x^{2k+l+1} + \int_{0}^{x}
K(x,t)t^{2k+l+1}\,dt\biggr)\\
=  &  \sum_{k=0}^{\infty}\frac{(-1)^{k}\lambda^{k+(l+1)/2}}{\Gamma
(k+1)\Gamma(k+l+\frac32)2^{2k+l+1/2}} \mathbf{T}[x^{2k+l+1}].
\end{split}
\end{equation}
The function $K(x,\cdot)$ is square-integrable on $[0,x]$, see \eqref{KinL2},
and the function $j_{l+1/2}(\cdot,\lambda)$ is the limit of the uniformly
convergent series \eqref{jSeries}, hence the possibility to change the
order of summation and integration in \eqref{EqForMappingProperty}
follows from the continuity of the scalar product in $L^{2}[0,a]$.

Since the equality in \eqref{EqForMappingProperty} holds for all $x$ and
$\lambda$, we finally obtain that
\begin{equation}
\label{MappingProperty}\mathbf{T}[x^{2k+l+1}] = (-1)^{k} 2^{2k} k!
\Bigl(l+\frac32\Bigr)_{k} u_{0}(x) \widetilde X^{(2k)}(x).
\end{equation}

\section{Numerical implementation and examples\label{SectNumeric}}

Based on the results of the previous sections we can formulate a numerical
method for solving spectral problems for perturbed Bessel equations.

\begin{enumerate}
\item Find a particular solution $u_{0}$ of equation \eqref{PartSolEq}
satisfying the asymptotic conditions \eqref{SolAsymptotic} and
\eqref{DerSolAsymptotic}. Note that due to the proof of Theorem
\ref{ThmSpectralProblem} it is sufficient to check that the solution satisfies
only the asymptotic condition \eqref{SolAsymptotic}. If an analytic
expression for the particular solution is unknown, one can use a numerical
approximation suggested by Proposition \ref{PropPartSol}.

\item Check that the particular solution $u_{0}$ obtained in step 1 is
non-vanishing for $x\in(0,a]$. Under the conditions of Corollary
\ref{CorrNonVanishingPS} it is always the case. If the particular solution has
zeros on $(0,a]$, the spectral shift technique described in Section
\ref{SectSpShift} can help, either directly as described in Remark
\ref{RmkBddQ} or by finding a suitable value of $\lambda_{0}$, complex in general.

\item Use partial sums of the series \eqref{SPPSsol} and \eqref{SPPSsolDer} (or,
correspondingly, \eqref{SPPSsolSpShift} and \eqref{SPPSdersolSpShift}) to
obtain a polynomial
\begin{equation}
\label{PhiN}\Phi_{N}(\lambda) = \bigl(\beta u_{0}(a)+\gamma u_{0}^{\prime
}(a)\bigr)+ \sum_{k=1}^{N}\lambda^{k}\left(  \bigl(\beta u_{0}(a)+\gamma
u_{0}^{\prime}(a)\bigr) \widetilde{X}^{(2k)}(a)-\frac{\gamma}{u_{0}%
(a)}\widetilde{X}^{(2k-1)}(a)\right)
\end{equation}
approximating the characteristic function \eqref{CharEq}.

\item Find zeros of the calculated polynomial $\Phi_{N}(\lambda)$.

\item To improve the accuracy of the eigenvalues located farther from the
point $\lambda_{0}$, the spectral parameter corresponding to the current
particular solution, perform one or several steps of the spectral shift
technique.

\end{enumerate}

Since the characteristic function $\Phi(\lambda)$ is analytic, the Rouch\'{e}
theorem from complex analysis, see, e.g., \cite{Conway}, provides the
stability of the numerical procedure. Indeed, let $\Gamma$ be an arbitrary
simple closed contour on which $\Phi$ does not vanish. Then if the absolute
error of approximation $|\Phi_{N}-\Phi|$ is less than $\min|\Phi_{N}|$ on
$\Gamma$, due to Rouch\'{e}'s theorem the functions $\Phi_{N}$ and $(\Phi
-\Phi_{N})+\Phi_{N} = \Phi$ possess the same number of zeros inside $\Gamma$.
Hence the procedure described above does not produce additional (or on the
contrary less) zeros whenever $\Phi_{N}$ approximates well enough the function
$\Phi$. We illustrate this point below, in Example \ref{Ex6}.

Before considering numerical examples let us explain how the numerical
implementation of the SPPS method was done in this work. All the calculations
were performed with the help of Matlab 2010 in the double precision machine
arithmetics. The formal powers $\widetilde X^{(n)}$ were calculated using the
Newton-Cottes 6 point integration formula of 7-th order, see, e.g.,
\cite{DavisRabinovich}, modified to perform indefinite integration. We choose
$M$ equally spaced points covering the segment of interest and apply the
integration formula to overlapping groups of six points. It is worth
mentioning that for large values of the parameter $l$ a special care should be
taken near the point $0$, because even small errors in the values of
$\widetilde X^{(2n+1)}$ after the division by $u_{0}^{2}\sim x^{2(l+1)}$ lead
to large errors in the computation of $\widetilde X^{(2n)}$ on the whole
interval $[0,a]$. To overcome this difficulty we change the values of
$\widetilde X^{(2n+1)}$ in several points near zero to their asymptotic
values. This strategy leads to a good accuracy. The computation of the first
100--200 formal powers proved to be a completely feasible task, and even for
$M$ being as large as several millions the computation time of the whole set of formal powers is within seconds. In
the presented numerical results we specify two parameters: $N$ is the degree
of the polynomial $\Phi_{N}$ in \eqref{PhiN}, i.e., the number of the calculated
formal powers is $2N$, and $M$ is the number of points taken on the considered
segment for the calculation of integrals.

Let us stress that the formal powers do not depend on the spectral parameter
and once calculated can be used for computing the solution and/or the
characteristic function $\Phi(\lambda)$ for thousands of different values of
the spectral parameter $\lambda$ without any additional significant
computation cost.

There exist several computer codes for the solution of singular
Sturm-Liouville problems. We compare our results with the results produced by
SLEIGN2 \cite{BEZ2} and MATSLISE \cite{LVV}. Both packages can reliably solve
a variety of spectral problems for regular and singular Sturm-Liouville
problems and are considered as basic comparison tools, the second package to our best knowledge is one of the most accurate. Both packages work with
double precision machine arithmetics and were used with the parameters for the
highest possible accuracy goals. It should be mentioned that the applicability
of both packages is restricted to the self-adjoint situation and they do not
permit neither complex coefficients nor a first order differential operator at
a spectral parameter. Thus, to illustrate the performance of the proposed
method in a situation when a comparison to the existing
software is impossible, in Example \ref{Ex6} we consider a problem for which an exact characteristic equation can be written down and compare the obtained numerical results to those calculated from the exact equation.

\subsection{Real spectrum}

\begin{example}
\label{Ex1}Our first numerical example is the Bessel equation, which we
already touched on in Example \ref{ExampleBessel}. Consider the
following spectral problem \cite[Example 2]{BEZ}
\begin{equation}
\label{BEZex2eqn}%
\begin{cases}
-y^{\prime\prime}+\frac{c}{x^{2}}y=\lambda y, & 0<x\leq1,\\
y(1,\lambda)=0. &
\end{cases}
\end{equation}
In this and in all other considered examples the solution $y(x,\lambda)$ should also satisfy the
boundary conditions \eqref{EqBC1} and \eqref{EqBC3} at 0. As follows from
\eqref{ExampleBesselSol}, the eigenvalues of the problem \eqref{BEZex2eqn}
coincide with zeros of the Bessel function $J_{\nu}(s)$, where
$s=\sqrt{\lambda}$ and $\nu=\sqrt{c+\frac14}$.

We compare the results delivered by the SPPS method to those from \cite[Example 2]{BEZ} for a
particular case $c=\frac5{16}$. For the SPPS representation \eqref{SPPSsol} the exact
particular solution $u_{0}=x^{5/4}$ was used. The results are presented in Table
\ref{Ex1Table1} together with the exact eigenvalues calculated as squares of
zeros of $J_{3/4}(s)$ and computed using the Matlab routine
\texttt{besselzero.m} by Greg von Winckel.

\begin{table}[ptb]
\centering
\begin{tabular}
[c]{cr@{}lr@{}lr@{}l}\hline
$n$ & \multicolumn{2}{c}{$\lambda_{n}$ (SPPS)} & \multicolumn{2}{c}{$\lambda
_{n}$ (SLEIGN2, \cite{BEZ})} & \multicolumn{2}{c}{$\lambda_{n}$ (Exact)}%
\\\hline
1 & 12. & 1871394680951 & 12. & 187139459 & 12. & 1871394680951\\
2 & 44. & 257559403500 & 44. & 257558912 & 44. & 257559403502\\
3 & 96. & 07160483898 & 96. & 071604502 & 96. & 07160483884\\
4 & 167. & 62571241787 & 167. & 625711908 & 167. & 62571242058\\
5 & 258. & 91930037169 & 258. & 919292439 & 258. & 91930035744\\
6 & 369. & 95220905860 & 369. & 952209262 & 369. & 95220926235\\
7 & 500. & 72440133519 & 500. & 724370471 & 500. & 72438147579\\
8 & 651. & 23517308180 & 651. & 235865279 & 651. & 23579210254\\
9 & 821. & 50506498326 & 821. & 486428982 & 821. & 48642898238\\
10 & 988. & 97560762340 & 1011. & 476285608 & 1011. & 47628560802\\\hline
\end{tabular}
\caption{The eigenvalues from Example \ref{Ex1} for $c=\frac5{16}$, calculated
with $N=40$ and $M=5\cdot10^{4}$.}%
\label{Ex1Table1}%
\end{table}

As can be seen from Table \ref{Ex1Table1}, the accuracy of the higher
eigenvalues calculated by the SPPS method decreases. To improve the accuracy the spectral shift
technique described in Section \ref{SectSpShift} is applied. We choose that the values of the parameter $\lambda_{0}$ change
along a line in the complex plane and are given by $\lambda_{0}%
^{(n)}=50n+2ni$, $n=1\ldots4000$. On each step we compute the solution in the
next point $\lambda_{0}^{(n+1)}$ and use this solution as a particular
solution for the next step. Since the spectral problem has a purely real
spectrum, among the zeros of the approximating polynomial the ones
with a small imaginary part are chosen and those which are closer to the number
$\mathop{\mathrm{Re}}\lambda_{0}^{(n)}$ than to any other of the numbers
$\mathop{\mathrm{Re}}\lambda_{0}^{(k)}$, $k\ne n$ are stored. The obtained results are
presented in Table \ref{Ex1Table2} together with the used values of
$\lambda_{0}$ for the spectral shift and the exact eigenvalues. In the first column of the table we combine the
results produced by the MATSLISE package with the exact eigenvalues because in this case all
the presented digits coincide.

As can be seen from Table \ref{Ex1Table2}, the spectral shift technique allows
us to significantly improve the obtained results for the higher eigenvalues. From now on we present only the results obtained with the help of the spectral shift technique and specify the implemented rules for choosing spectral shifts.
\end{example}

\begin{table}[ptb]
\centering
\begin{tabular}
[c]{cr@{}lr@{}lr@{+}lr@{}l}\hline
$n$ & \multicolumn{2}{c}{$\lambda_{n}$ (exact/MATSLISE)} &
\multicolumn{2}{c}{$\lambda_{n}$ (SPPS)} & \multicolumn{2}{c}{$\lambda_{0}$
used} & \multicolumn{2}{c}{$\lambda_{n}$ (SLEIGN2)}\\\hline
1 & 12. & 1871394680951 & 12. & 1871394680951 & 1 & $0.5i$ & 12. & 187139459\\
2 & 44. & 257559403502 & 44. & 257559403500 & 50 & $2i$ & 44. & 257558912\\
3 & 96. & 071604838843 & 96. & 071604838834 & 100 & $4i$ & 96. & 071604502\\
4 & 167. & 62571242058 & 167. & 62571242056 & 150 & $6i$ & 167. & 625711908\\
5 & 258. & 91930035744 & 258. & 91930035742 & 250 & $10i$ & 258. & 919292439\\
6 & 369. & 95220926235 & 369. & 95220926232 & 350 & $14i$ & 369. & 952209262\\
7 & 500. & 72438147579 & 500. & 72438147575 & 500 & $20i$ & 500. & 724370471\\
8 & 651. & 23579210254 & 651. & 23579210250 & 650 & $26i$ & 651. & 235865279\\
9 & 821. & 48642898238 & 821. & 48642898235 & 800 & $32i$ & 821. & 486428982\\
10 & 1011. & 47628560802 & 1011. & 47628560801 & 1000 & $40i$ & 1011. &
476285608\\
30 & 8956. & 5077203636 & 8956. & 5077203638 & 8950 & $358i$ & 8956. &
507721371\\
50 & 24797. & 222775294 & 24797. & 222775296 & 24800 & $992i$ & 24796. &
866878938\\
75 & 55701. & 421553437 & 55701. & 421553167 & 50000 & $2000i$ & 55701. &
328815911\\
100 & 98942. & 625835 & 98942. & 625812 & 100000 & $4000i$ & 98943. &
253862034\\\hline
\end{tabular}
\caption{The eigenvalues from Example \ref{Ex1} for $c=\frac5{16}$, calculated
with the use of the spectral shift technique with $N=40$ and $M=5\cdot10^{4}%
$.}%
\label{Ex1Table2}%
\end{table}

\begin{example}
\label{Ex2} The second example is the Boyd equation. Consider the following
spectral problem \cite[Example 3]{BEZ}
\begin{equation}
\label{BEZex3eqn}%
\begin{cases}
-y^{\prime\prime}-\frac{1}{x}y=\lambda y, & 0<x\leq1\\
y(1,\lambda)=0. &
\end{cases}
\end{equation}
This equation fits into the general scheme if one takes $l=0$. We take the
function $u_{0}=\sqrt x J_{1}(2\sqrt x)$ as a particular solution of
\eqref{PartSolEq} and calculate eigenvalues using the SPPS method with $N=40$
and $M=50000$. For problem \eqref{BEZex3eqn} the characteristic equation is
known and is given by
\begin{equation}
\label{Ex2CharEq}k M_{k,1/2}(2i\sqrt{\lambda}) = 0,
\end{equation}
where $M_{k,1/2}$ is the Whittaker function and $k=k(\lambda)=(2i\sqrt
{\lambda})^{-1}$, see \cite[Example 3]{BEZ}. In Table \ref{Ex2Table2} we
present the obtained results together with the exact eigenvalues calculated
from \eqref{Ex2CharEq} and the results obtained using SLEIGN2 and MATSLISE
software. Note that some eigenvalues caused problems for the MATSLISE package,
despite the excellent accuracy of the rest of the results. To simplify the reading, in the
presented numbers we truncated some digits which do not coincide with the correct ones. Relative errors of the first 50 eigenvalues obtained by
SPPS, MATSLISE and SLEIGN2 are presented on Figure \ref{BEZex3error}. We emphasize a relative stability of the numerical results delivered by the SPPS method in comparison to those computed by MATSLISE and SLEIGN2. We mention also that the
same problem was considered in \cite{AKKW} where less accurate results were
reported: $\lambda_{1}=7.37398502$, $\lambda_{2}=36.3360196$, $\lambda
_{3}=85.2925811$, $\lambda_{4}=154.098619$ and $\lambda_{5}=242.705545$.

\begin{table}[ptb]
\centering
\begin{tabular}
[c]{cr@{}lr@{}lr@{}lr@{}lr@{}l}\hline
$n$ & \multicolumn{2}{c}{$\lambda_{n}$ (SPPS)} & \multicolumn{2}{c}{$\lambda
_{n}$ (exact)} & \multicolumn{2}{c}{$\lambda_{n}$ (SLEIGN2)} &
\multicolumn{2}{c}{$\lambda_{n}$ (MATSLISE)} &  & \\\hline
1 & 7. & 3739850151752 & 7. & 3739850151751 & 7. & 37398499 & 7. &
3739850151751 &  & \\
2 & 36. & 3360195952325 & 36. & 3360195952318 & 36. & 33601959522 & 36. &
3360195952318 &  & \\
3 & 85. & 292582094149 & 85. & 292582094137 & 85. & 29258240 & 85. &
292582094137 &  & \\
4 & 154. & 098623739770 & 154. & 098623739767 & 154. & 098623739742 & 154. &
098623739767 &  & \\
5 & 242. & 705559362903 & 242. & 705559362911 & 242. & 705559362924 & 242. &
705559362911 &  & \\
6 & 351. & 091167129407 & 351. & 091167129418 & 351. & 09116712937 & 351. &
091167129418 &  & \\
8 & 627. & 155044324547 & 627. & 155044324564 & 627. & 155033 & 627. &
155044324564 &  & \\
10 & 982. & 239093680177 & 982. & 239093680188 & 982. & 239069 & 982. &
239093680188 &  & \\
13 & 1662. & 98063088581 & 1662. & 98063088578 & 1662. & 9806308832 & 1662. &
76 &  & \\
20 & 3942. & 42966385096 & 3942. & 42966385102 & 3942. & 18 & 3942. &
42966385102 &  & \\
28 & 7732. & 02180519217 & 7732. & 02180519214 & 7732. & 021805177 & 7729. &
4 &  & \\
30 & 8876. & 82700072940 & 8876. & 82700072941 & 8876. & 8270009 & 8876. &
82700072941 &  & \\
35 & 12084. & 29442705883 & 12084. & 29442705875 & 12084. & 263 & 12083. &
98 &  & \\
40 & 15785. & 2626475018 & 15785. & 2626475007 & 15784. & 3 & 15785. &
2626475007 &  & \\
50 & 24667. & 683593322 & 24667. & 683593313 & 24667. & 683593398 & 24662. &
5 &  & \\\hline
\end{tabular}
\caption{The eigenvalues from Example \ref{Ex2}, calculated with the use of
the spectral shift technique with $N=40$, $M=50000$. The spectral shift is given by $\lambda_{0}^{(n)}=50n+(2n+0.5)i$.}
\label{Ex2Table2}%
\end{table}

\begin{figure}[ptb]
\centering
\includegraphics[bb=119 301 491 489,
height=2.35in,
width=4.65in
]
{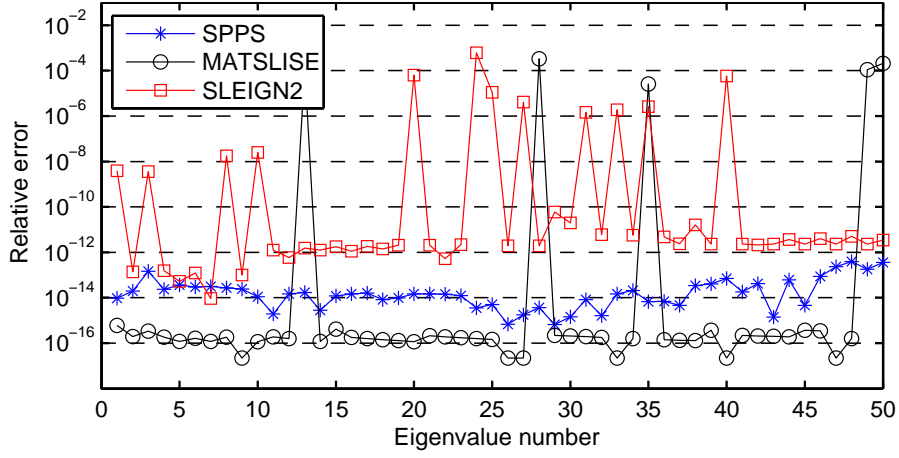}
\caption{Relative error of the first 50 eigenvalues of the spectral problem for the Boyd equation
\eqref{BEZex3eqn} calculated with SPPS, SLEIGN2 and MATSLISE.}%
\label{BEZex3error}%
\end{figure}

The SPPS representation allows one to calculate easily the approximate
eigenfunctions. The eigenfunctions that were obtained for this example are
shown in Fig. \ref{BEZex3fig}.

\begin{figure}[ptb]
\centering
\includegraphics[bb=126 295 486 497,
height=2.8in,
width=5in
]
{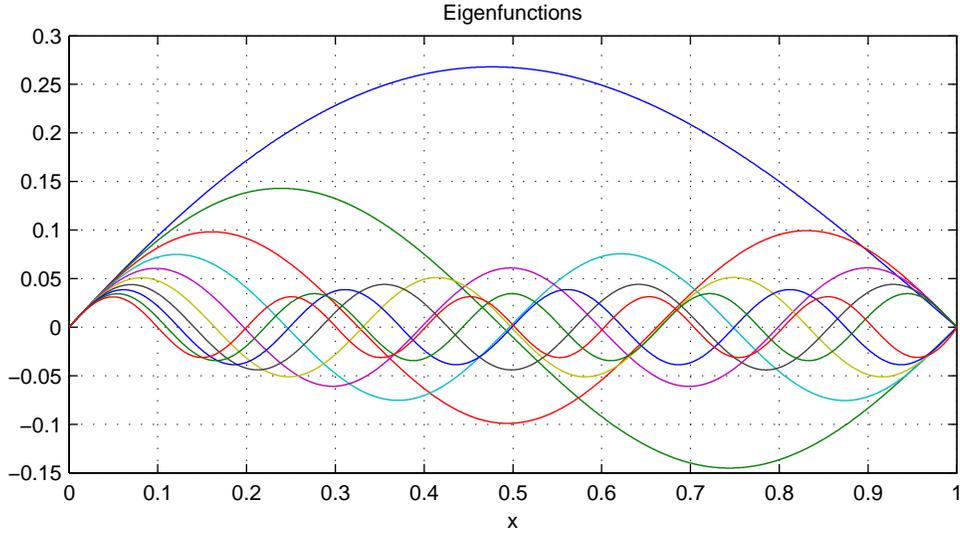}
\caption{The first 10 eigenfunctions of problem \eqref{BEZex3eqn} calculated
with the use of the SPPS method.}%
\label{BEZex3fig}%
\end{figure}
\end{example}

\begin{example}
\label{Ex3} Consider the following spectral problem \cite[Example
2]{BoumenirChanane}.
\begin{equation}
\label{BCex2eqn}%
\begin{cases}
-y^{\prime\prime}+\left(  \frac{\nu^{2}-\frac{1}{4}}{x^{2}}+x^{2}\right)
y=\lambda y, & 0<x\leq\pi\\
y(\pi,\lambda)=0. &
\end{cases}
\end{equation}
The parameter $\nu$ was chosen equal to $2$. In this example we computed both
partial and general solutions using the SPPS representations. Since the exact
partial solution satisfying the asymptotic condition \eqref{SolAsymptotic} is
known and is given by $4\sqrt x I_{1}\bigl(\frac{x^{2}}2\bigr)$, we compared
the approximate solution with the exact one. For $N=40$ and $M=50000$ the
absolute error was less than $7\cdot10^{-16}$. With the aid of Maple software we found
that the characteristic equation of problem \eqref{BCex2eqn} has the form
\begin{equation}
\label{Ex3CharEq}\frac1{\sqrt{\pi}}M_{\lambda/4,1}(\pi^{2})=0,
\end{equation}
where $M_{\lambda/4,1}$ is the Whittaker function, and used equation
\eqref{Ex3CharEq} to compute the exact eigenvalues. In Table \ref{Ex3Table2}
we present the results obtained by the SPPS method, exact eigenvalues and the
results from \cite{BoumenirChanane}, MATSLISE and SLEIGN2. As in Example \ref{Ex1} here again all presented
digits in the exact eigenvalues coincide with the results delivered by MATSLISE. The performance of the SPPS method was considerably better than that of SLEIGN2 and even the 50th eigenvalue was computed correctly to 9 decimal places.

\begin{table}[ptb]
\centering
\begin{tabular}
[c]{cr@{}lr@{}lr@{}lr@{}l}\hline
$n$ & \multicolumn{2}{c}{$\sqrt{\lambda_{n}}$ (SPPS)} &
\multicolumn{2}{c}{$\sqrt{\lambda_{n}}$ (Exact/MATSLISE)} &
\multicolumn{2}{c}{$\sqrt{\lambda_{n}}$ (SLEIGN2)} & \multicolumn{2}{c}{$\sqrt
{\lambda_{n}}$ (\cite{BoumenirChanane})}\\\hline
1 & 2. & 4629499739737 & 2. & 4629499739740 & 2. & 46295003 & 2. & 462949030\\
2 & 3. & 2883529299493 & 3. & 2883529299426 & 3. & 28835311 & 3. & 288339398\\
3 & 4. & 1498642187456 & 4. & 1498642187448 & 4. & 14986471 & 4. & 149833151\\
4 & 5. & 0636688237348 & 5. & 0636688237341 & 5. & 0636695 & 5. & 063634795\\
5 & 6. & 0075814581165 & 6. & 0075814581160 & 6. & 0075836 & 6. & 007577378\\
7 & 7. & 9397373768999 & 7. & 9397373768993 & 7. & 939745 &  & \\
10 & 10. & 8861250916182 & 10. & 8861250916173 & 10. & 886149 &  & \\
15 & 15. & 8426318195682 & 15. & 8426318195682 & 15. & 84275 &  & \\
20 & 20. & 8202301908125 & 20. & 8202301908124 & 20. & 82057 &  & \\
30 & 30. & 7973502195887 & 30. & 7973502195868 & 30. & 7989 &  & \\
50 & 50. & 77867680977 & 50. & 77867680951 & 50. & 789 &  & \\\hline
\end{tabular}
\caption{The eigenvalues from Example \ref{Ex3}, calculated with the use of
the spectral shift technique with $N=50$, $M=50000$. The spectral shift is
given by given by $\lambda_{0}^{(n)}=10n+(n+1)i$.}%
\label{Ex3Table2}%
\end{table}
\end{example}

\begin{example}
\label{Ex4} Consider a particular case of the hydrogen atom equation
\cite[Example 4]{BoumenirChanane}.
\begin{equation}
\label{BCex4eqn}%
\begin{cases}
-y^{\prime\prime}+\left(  \frac{c}{x^{2}}+\frac{1}{x}\right)  y=\lambda y, &
0<x\leq\pi\\
y(\pi,\lambda)=0. &
\end{cases}
\end{equation}
We have chosen $c=6$ and found with the help of Maple the
characteristic equation of problem \eqref{BCex4eqn}
\begin{equation}
\label{Ex4CharEq}\frac i{8\lambda\sqrt{\lambda}}M_{\frac{i}{2\sqrt{\lambda}%
},\frac52}(2i\sqrt{\lambda}\pi)=0,
\end{equation}
where $M_{\frac{i}{2\sqrt{\lambda}},\frac52}$ is the Whittaker function. In
this example we computed both partial and general solutions using the SPPS
representations. The procedure for computing the eigenvalues for this example is
completely analogous to what was described above. In Table \ref{Ex4Table2} we present
the results obtained by the SPPS method, exact eigenvalues and the results
from \cite{BoumenirChanane}, MATSLISE and SLEIGN2. Again all presented digits
in the exact eigenvalues coincide with the results delivered by MATSLISE. The SPPS method and SLEIGN2 performed similarly to the previous example.

The next example shows that the situation may change significantly when another value of the parameter $c$ is chosen.
\end{example}

\begin{example}\label{Ex4b} Consider the same problem as in Example \ref{Ex4} but $c=-1/4$ which corresponds to $l=-1/2$ in \eqref{MainEq}. The computation of eigenvalues for
problem \eqref{BCex4eqn} performed by MATSLISE took several hours on Intel i7-3770 microprocessor meanwhile the computation time required by SLEIGN2 and SPPS did not change significantly. We
present the relative error of the first 50 eigenvalues on Figure
\ref{FigEx4error}. For the SPPS method we used $N=40$ and $M=1000000$. The
spectral shift was given by $\lambda_{0}^{(n)}=10n+(n+1)i$. The accuracy of the results delivered by MATSLISE was considerably lower and once again we emphasize the stability of the accuracy of the eigenvalues computed by the SPPS method in comparison to SLEIGN2. It is worth mentioning that not only for the extreme value $l=-1/2$ but also for the values close to $-1/2$ the computation time required by the MATSLISE package increases significantly.
\end{example}

\begin{table}[ptb]
\centering
\begin{tabular}
[c]{cr@{}lr@{}lr@{}lr@{}l}\hline
$n$ & \multicolumn{2}{c}{$\sqrt{\lambda_{n}}$ (SPPS)} &
\multicolumn{2}{c}{$\sqrt{\lambda_{n}}$ (exact/MATSLISE)} &
\multicolumn{2}{c}{$\sqrt{\lambda_{n}}$ (SLEIGN2)} & \multicolumn{2}{c}{$\sqrt
{\lambda_{n}}$ (\cite{BoumenirChanane})}\\\hline
1 & 1. & 97027445061574 & 1. & 97027445061572 & 1. & 9702743 & 1. &
970274439470\\
2 & 3. & 00436042551872 & 3. & 00436042551857 & 3. & 0043600 & 3. &
004360435708\\
3 & 4. & 01515351791731 & 4. & 01515351791736 & 4. & 015153523 & 4. &
015153641332\\
4 & 5. & 0193472218607 & 5. & 0193472218612 & 5. & 0193459 & 5. &
019347630098\\
5 & 6. & 0210053515482 & 6. & 0210053515488 & 6. & 0210049 & 6. &
021006315094\\
7 & 8. & 0215089715470 & 8. & 0215089715478 & 8. & 0215092 &  & \\
10 & 11. & 0202653559392 & 11. & 0202653559399 & 11. & 0202663 &  & \\
15 & 16. & 0176675547289 & 16. & 0176675547294 & 16. & 0176656 &  & \\
20 & 21. & 0155251794158 & 21. & 0155251794156 & 21. & 0155504 &  & \\
30 & 31. & 0125189152594 & 31. & 0125189152597 & 31. & 01269 &  & \\
50 & 51. & 00916429569 & 51. & 00916429551 & 51. & 0097 &  & \\\hline
\end{tabular}
\caption{The values of $\sqrt{\lambda_{n}}$ from Example \ref{Ex4}, calculated
with the use of the spectral shift technique with $N=40$, $M=50000$. The
spectral shift is given by $\lambda_{0}^{(n)}=10n+(n+1)i$.}%
\label{Ex4Table2}%
\end{table}

\begin{figure}[ptb]
\centering
\includegraphics[bb=126 295 486 497,
height=2.8in,
width=5in
]
{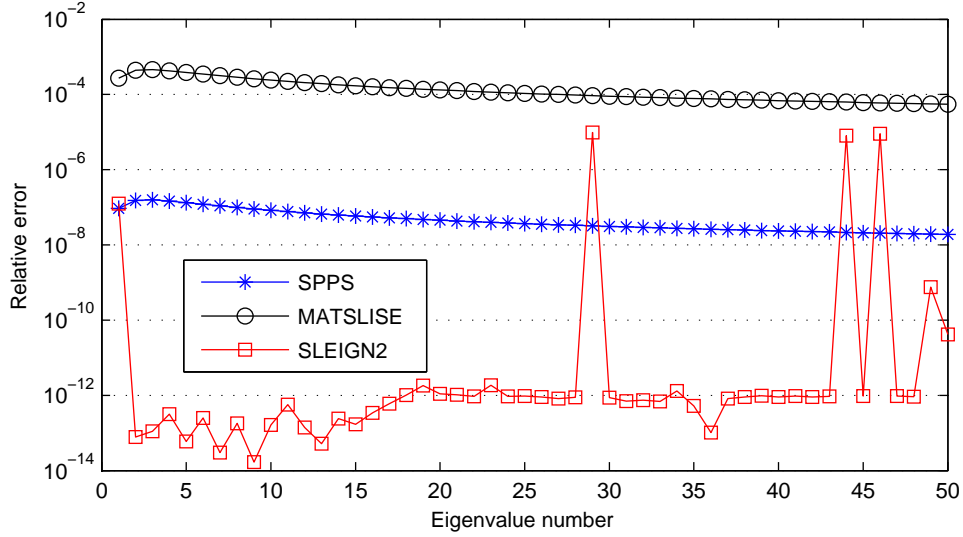}
\caption{Relative error of the first 50 eigenvalues of problem
\eqref{BCex4eqn} with $c=-1/4$ calculated with SPPS, SLEIGN2 and MATSLISE.}%
\label{FigEx4error}%
\end{figure}

\begin{example}
\label{Ex5} Consider the following spectral problem \cite[Example
3]{BoumenirChanane}.
\begin{equation}
\label{BCex3eqn}%
\begin{cases}
-y^{\prime\prime}+\left(  \frac{2}{x^{2}}+\sin x\right)  y=\lambda y, &
0<x\leq\pi\\
y(\pi,\lambda)=0. &
\end{cases}
\end{equation}
For this problem we were unable to find an exact characteristic equation.
We computed both particular and general solutions using the SPPS representations.
The obtained results are presented in Table \ref{Ex5Table1}. The eigenvalues computed by the SPPS method are very close to those delivered by MATSLISE.
\end{example}

\begin{table}[ptb]
\centering
\begin{tabular}
[c]{cr@{}lr@{}lr@{}lr@{}l}\hline
$n$ & \multicolumn{2}{c}{$\sqrt{\lambda_{n}}$ (SPPS)} &
\multicolumn{2}{c}{$\sqrt{\lambda_{n}}$ (MATSLISE)} &
\multicolumn{2}{c}{$\sqrt{\lambda_{n}}$ (SLEIGN2)} &
\multicolumn{2}{c}{Results from \cite{BoumenirChanane}}\\\hline
1 & 1. & 69965392162508 & 1. & 69965392162512 & 1. & 69965496630163 & 1. &
699674822427\\
2 & 2. & 60438727880122 & 2. & 60438727880111 & 2. & 60439344911062 & 2. &
604506077325\\
3 & 3. & 56972957088682 & 3. & 56972957088910 & 3. & 56974717663817 & 3. &
570068095387\\
4 & 4. & 55232022604084 & 4. & 55232022604096 & 4. & 55235767643976 & 4. &
553053525686\\
5 & 5. & 54189892161891 & 5. & 54189892161906 & 5. & 54196736047125 & 5. &
543261224280\\
7 & 7. & 53001773432564 & 7. & 53001773432606 & 7. & 53019003621712 &  & \\
10 & 10. & 5211087141260 & 10. & 5211087141255 & 10. & 5215767716971 &  & \\
15 & 15. & 5141539227764 & 15. & 5141539227760 & 15. & 5156303086973 &  & \\
20 & 20. & 5106568768322 & 20. & 5106568768319 & 20. & 5139986174471 &  & \\
30 & 30. & 5071385063024 & 30. & 5071385063018 & 30. & 5174365365478 &  & \\
50 & 50. & 5043027454211 & 50. & 5043027452760 & 50. & 5434201591972 &  &
\\\hline
\end{tabular}
\caption{The values of $\sqrt{\lambda_{n}}$ from Example \ref{Ex5}, calculated
with the use of the spectral shift technique with $N=40$, $M=50000$. The
spectral shift is given by $\lambda_{0}^{(n)}=10n+(n+1)i$.}
\label{Ex5Table1}%
\end{table}

\subsection{Complex spectrum}
Numerical tests discussed in the previous subsection show that the SPPS method is highly competitive with the best existing codes on their field of applicability. It delivers stable and reliable results even though there remains still plenty of room for improving different computational aspects of the developed programs which implement the SPPS method. Moreover, the range of applicability of the SPPS method to the difference from the other considered codes includes complex coefficients, differential operator on the right-hand side of \eqref{MainEq} and complex eigenvalues. Here we present one such example.

\begin{example}
\label{Ex6} Consider the following spectral problem with the right-hand
side of the equation involving a derivative
\begin{equation}
\label{ExComplex}%
\begin{cases}
-y^{\prime\prime}+\frac{l(l+1)}{x^{2}}y=\lambda y^{\prime}, & 0<x\leq1\\
y^{\prime}(1,\lambda)=0. &
\end{cases}
\end{equation}
Using Maple we found that the solution of equation
\eqref{ExComplex} satisfying the asymptotic condition \eqref{SolAsymptotic} is
given by the expression
\[
y(x;\lambda) = \frac{2^{2l+1}\Gamma(l+3/2)}{\lambda^{l+1/2}}\sqrt
{x}e^{-\lambda x/2}I_{l+1/2}\left(  \frac{\lambda x}2\right)  ,
\]
where $I$ is the modified Bessel function of the first kind. The
derivative of this solution has the form
\[
y^{\prime}(x;\lambda) = -\frac{2^{2l}\Gamma(l+3/2) e^{-\lambda x/2}}%
{\lambda^{l+1/2}\sqrt x}\left(  (2l+\lambda x)I_{l+1/2}\left(  \frac{\lambda
x}2\right)  -\lambda x I_{l-1/2}\left(  \frac{\lambda x}2\right)  \right)  .
\]
For a numerical experiment we have chosen $l=1/2$, hence the exact
characteristic equation is
\begin{equation}
\label{ExComplexCharEq}\Phi(\lambda):=-\frac{2 e^{-\lambda/2}}{\lambda}\left(
(1+\lambda)I_{1}\left(  \frac{\lambda}2\right)  -\lambda I_{0}\left(
\frac{\lambda}2\right)  \right)  =0.
\end{equation}

\begin{figure}[ptb]
\centering
\includegraphics[bb=126 298 486 493,
height=2.7in,
width=5in
]
{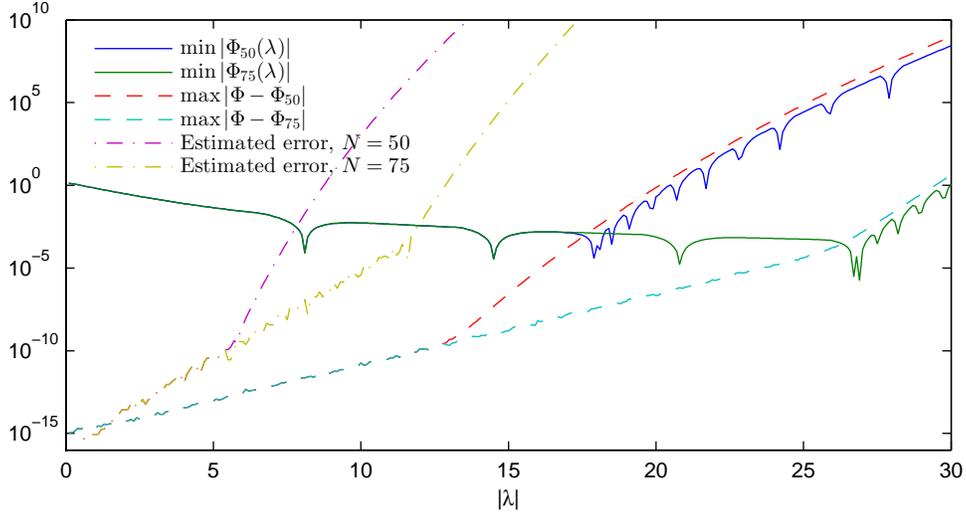}
\caption{The graphs of $\min_{|\lambda|=r}|\Phi_N(\lambda)|$,
$\max_{|\lambda|=r}|\Phi(\lambda)-\Phi_N(\lambda)|$ and estimate \eqref{EqSPPSsolDiscrepancy} as an illustration for Rouch\'{e}'s theorem in Example \ref{Ex6} for $N=50$ and  $N=75$.}
\label{FigComplexRouche}
\end{figure}

\begin{figure}[ptb]
\centering
\includegraphics[
height=3.333in,
width=5.00in,
natwidth=1500,
natheight=1000
%natheight=1050
]{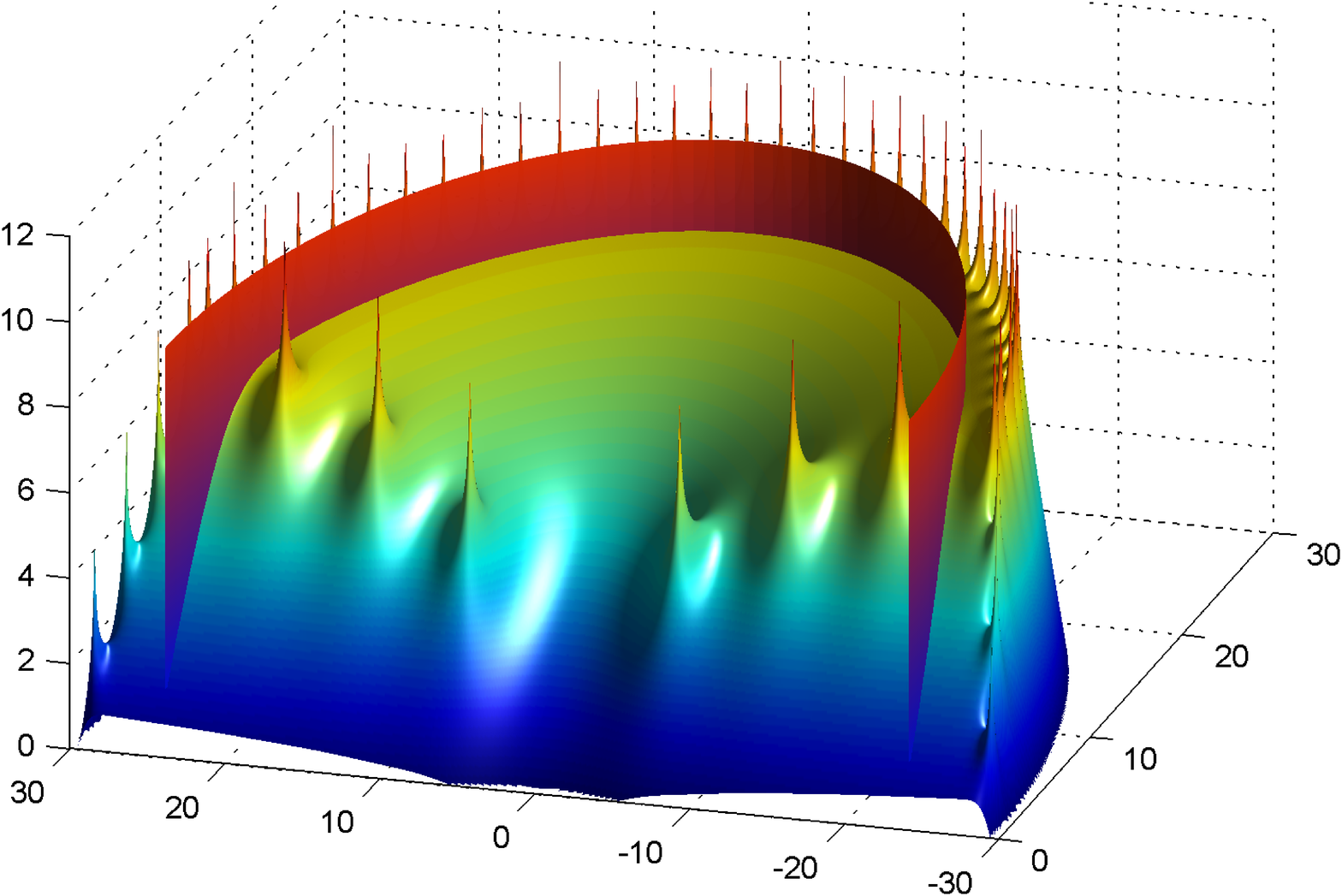}\newline\medskip
\includegraphics[height=2.8in,
width=5.00in,
natwidth=1500,natheight=841
]{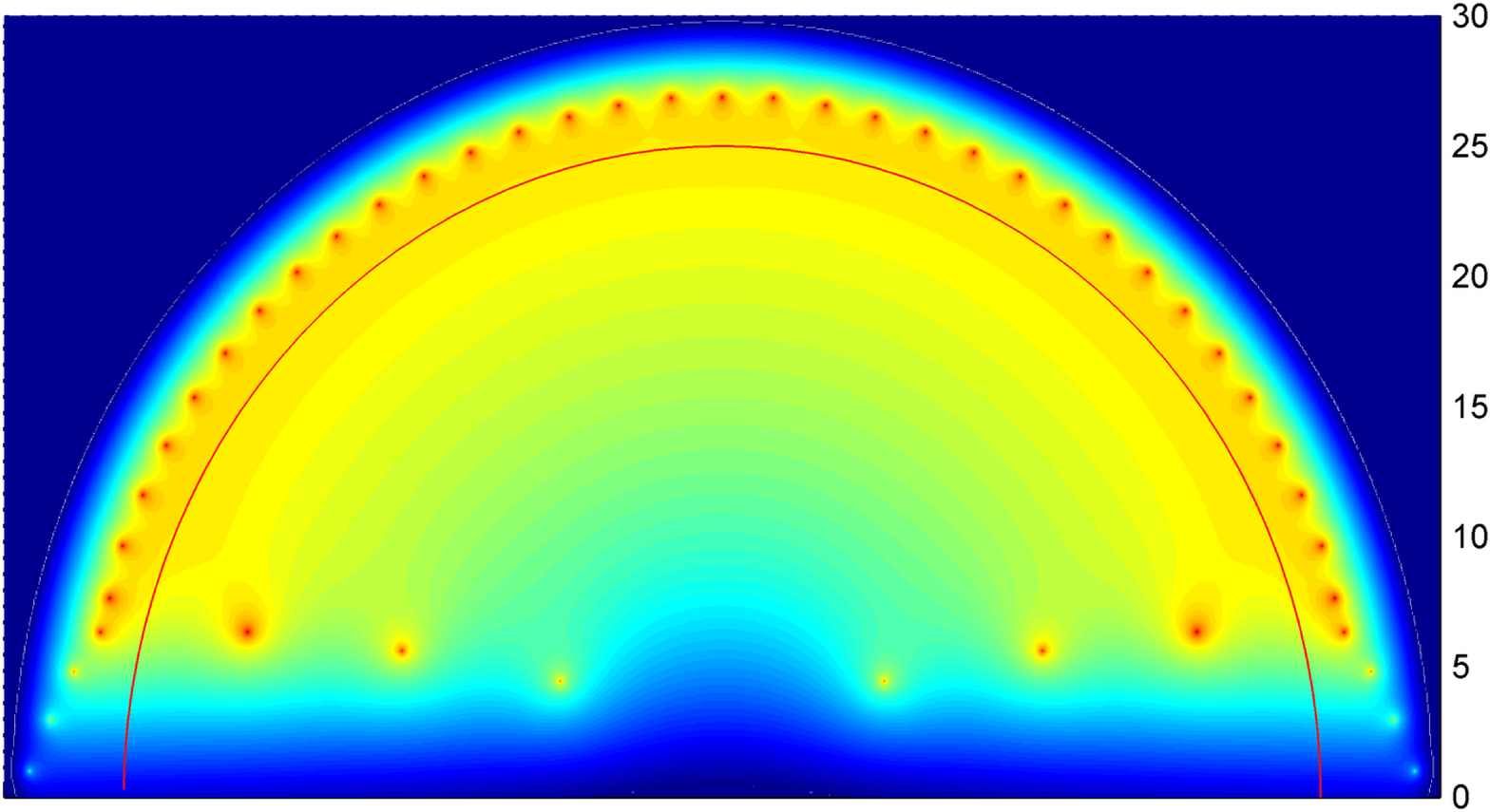}
\caption{The image above corresponds to the graph of $-\log|\Phi_{75}(\lambda)|$. The semicylinder marks the boundary of the disk $|\lambda|=24$ illustrating the region of applicability of  Rouch\'{e}'s theorem, see Example \ref{Ex6}. The six peaks inside the region represent approximate eigenvalues. The rest of the peaks correspond to the roots of the polynomial $\Phi_{75}(\lambda)$ appearing due to the truncation procedure and do not approximate the true eigenvalues of the problem. \newline
\indent The image below is the top view of the one above.}
\label{Ex6fig}
\end{figure}

\begin{table}[ptb]
\centering
\begin{tabular}
[c]{cr@{+}lr@{+}l}\hline
$n$ & \multicolumn{2}{c}{$\lambda_{n}$ (SPPS)} & \multicolumn{2}{c}{$\lambda
_{n}$ (Exact)}\\\hline
1 & 4.47123493365 & 6.76481747492$i$ & 4.47123493371 & 6.76481747480$i$\\
2 & 5.63553225528 & 13.37799928406$i$ & 5.63553225515 & 13.37799928396$i$\\
3 & 6.35749327967 & 19.82515033089$i$ & 6.35749327947 & 19.82515033081$i$\\
4 & 6.88515096019 & 26.20887598267$i$ & 6.88515095992 & 26.20887598266$i$\\
5 & 7.30184486323 & 32.56088281571$i$ & 7.30184486294 & 32.56088281579$i$\\
10 & 8.62739882797 & 64.14303168943$i$ & 8.62739882786 & 64.14303168978$i$\\
20 & 9.98333956705 & 127.0816376255$i$ &  9.98333956726 & 127.0816376257$i$\\
30 & 10.7844002548 & 189.9555609957$i$ & 10.7844002552 &  189.9555609955$i$\\
50 & 11.7983559318 & 315.6569255418$i$ & 11.7983559297 &  315.6569255437$i$\\
75 & 12.6055406455 & 472.7574366522$i$ &  12.6055406452 & 472.7574366509$i$\\
100 & 13.1790674123 & 629.8482784849$i$ & 13.1790674160 &
629.8482784850$i$\\\hline
\end{tabular}
\caption{The values of $\lambda_{n}$ from Example \ref{Ex6}, calculated with
the use of the spectral shift technique, $M=200000$ and $N=50$.}%
\label{Ex6Table2}%
\end{table}

When a problem admits complex eigenvalues we need to distinguish which roots of the polynomial $\Phi_N(\lambda)$ correspond to the eigenvalues and which are spurious roots appearing due to the truncation procedure. Contrary to the case of a purely real spectrum we cannot simply discard all roots whose imaginary part is greater than some $\varepsilon$. Instead, Rouch\'{e}'s theorem and estimate \eqref{EqSPPSsolDiscrepancy} suggest that the roots closest to the origin (or to the current centre $\lambda_0$ when the spectral shift is used) cannot be the spurious roots. We give an illustration of application of this theorem. According to Rouch\'{e}'s theorem we need to find such values of the radius $r$ that
\begin{equation}\label{EqRouche}
\min_{|\lambda|=r}|\Phi_N(\lambda)| > \max_{|\lambda|=r}|\Phi(\lambda)-\Phi_N(\lambda)|,
\end{equation}
establishing that the numbers of zeros of the functions $\Phi_N(\lambda)$ and $\Phi(\lambda)$ coincide inside the disk $|\lambda|<r$.
We calculate $\min_{|\lambda|=r}|\Phi_N(\lambda)|$ using the SPPS representation. To estimate the difference $|\Phi(\lambda)-\Phi_N(\lambda)|$ we use \eqref{EqSPPSsolDiscrepancy}. According to Lemma \ref{LemmaXtildeEstimate}, for problem \eqref{ExComplex} one can take $C=3/2$. We present the obtained results on Figure \ref{FigComplexRouche} for $N=50$ and $N=75$ where it is compared to $\max_{|\lambda|=r}|\Phi(\lambda)-\Phi_N(\lambda)|$ calculated with the aid of the exact characteristic function \eqref{ExComplexCharEq}. As can be seen from the graphs, when we use the exact characteristic function for the estimation of the error the largest values of $r$ for which inequality \eqref{EqRouche} holds are $r_{50}\approx 17$ and $r_{75}\approx 25$.
The estimate \eqref{EqSPPSsolDiscrepancy} delivers rougher estimates, $r'_{50}\approx 7$ and $r'_{75}\approx 12$. On Figure \ref{Ex6fig} we show the graph of $-\log|\Phi_{75}(\lambda)|$ together with the boundary of the disk $|\lambda|=24$. The peaks on the graph correspond to the roots of the polynomial $\Phi_{75}$. Only six of them are located in the disk and hence should be considered as approximations to the eigenvalues. All other roots are located outside the disk and hence should be discarded.

To calculate the higher order eigenvalues, we applied the spectral shift
technique described in Section \ref{SectSpShift}. The following strategy for choosing the values of the spectral shift was implemented. Let $\lambda_{*}^{(n)}$ be the spectral shift on the $n$-th step and $\lambda_{1},\ldots,\lambda_{n}$ the already found eigenvalues. Based on
the representations \eqref{SPPSsolSpShift} and \eqref{SPPSdersolSpShift} we
calculate the roots of the polynomial $\Phi_{N}^{(n)}(\lambda)$ and reorder them
with respect to the distance from $\lambda_{*}^{(n)}$. From these ordered
roots we choose the closest to the point $\lambda_{*}^{(n)}$ and sufficiently distant from $\lambda_{1},\ldots,\lambda_{n}$. We
denote this root as $\lambda_{n+1}$ and set $\lambda_{*}^{(n+1)} =
\lambda_{n+1}+\Delta\lambda$, where $\Delta\lambda$ is a fixed displacement.
Performing this procedure with $\lambda_{*}^{(0)}=0$, $\Delta\lambda=-i$,
$N=50$ and $M=200000$ we obtained the list of eigenvalues presented in Table
\ref{Ex6Table2}. The eigenvalues are ordered according to their distance to the origin. As can be seen from the table the approximate eigenvalues are calculated with a remarkable accuracy.
\end{example}

\section*{Acknowledgements}

We thank our colleague R. Michael Porter for providing us with a first version of a 6 points Newton-Cottes Matlab
numerical integration routine. R. Castillo would like to thank the support of
the SIBE and EDI programs of the IPN as well as that of the project SIP 20120438.
Research of V. Kravchenko and S. Torba was partially supported by
CONACYT, Mexico via the project 166141.

\end{document}